\newtheorem{theorem}{Theorem} [section]
\newtheorem{lemma}[theorem]{Lemma}
\newtheorem{proposition}[theorem]{Proposition}
\newtheorem{remark}[theorem]{Remark}
\newtheorem{definition}[theorem]{Definition}
\newtheorem*{ackno}{Acknowledgment}
\DeclareMathOperator*{\supp}{supp}
\newcommand{\I}{\mathcal{I}}
\newcommand{\noi}{\noindent}
\newcommand{\Z}{\mathbb{Z}}
\newcommand{\R}{\mathbb{R}}
\newcommand{\T}{\mathbb{T}}
\newcommand{\TT}{\mathcal{T}}
\newcommand{\M}{\mathcal{M}}
\newcommand{\NB}{\mathbb{N}}
\newcommand{\FL}{\mathcal{F}L} %%%%%%%%%%%% Fourier-Lebesgue spaces
\newcommand{\FLv}{\overrightarrow{\mathcal{F}L}}
\renewcommand{\H}{\mathcal{H}}
\newcommand{\ve}{\vec}
\newcommand{\F}{\mathcal{F}}
\newcommand{\IS}{\mathcal{I}}
\newcommand{\dl}{\delta}
\newcommand{\nb}{\nabla}
\newcommand{\vphi}{\vec{\phi}}
\newcommand{\eps}{\varepsilon}
\newcommand{\G}{\Gamma}
\newcommand{\ld}{\lambda}
\newcommand{\Si}{\Sigma}
\newcommand{\ft}{\widehat}
\newcommand{\Ft}{{\mathcal{F}}}
\newcommand{\dt}{\partial_t}
\renewcommand{\l}{\ell}
\renewcommand{\O}{\Omega}
\newcommand{\les}{\lesssim}
\newcommand{\ges}{\gtrsim}
\newcommand{\jb}[1]
{\langle #1 \rangle}
\newcommand{\fw}{\ft w}
\numberwithin{equation}{section}
\numberwithin{theorem}{section}
\begin{document}

\title[Norm inflation for NLW]
{A remark on norm inflation for nonlinear wave equations 
}

\author[J.~Forlano and M.~Okamoto]
{Justin Forlano and Mamoru Okamoto}

\address{
Justin Forlano\\
Maxwell Institute for Mathematical Sciences\\
Department of Mathematics\\
Heriot-Watt University\\
Edinburgh\\ 
 EH14 4AS\\
  United Kingdom \\}

\email{j.forlano@hw.ac.uk}

\address{
Mamoru Okamoto\\
Department of Mathematics \\
Graduate School of Science \\
Osaka University \\
Toyonaka\\
 Osaka 560-0043 \\
 Japan\\}

\email{okamoto@math.sci.osaka-u.ac.jp}
\subjclass{Primary 35L05, 35B30}
\date{}

%\dedicatory{Communicated by Y. Charles Li, received October 25, 2013.}

\keywords{nonlinear wave equation; ill-posedness; norm inflation}

\begin{abstract}
In this note, we study the ill-posedness of nonlinear wave equations (NLW). Namely, we show that 
NLW experiences norm inflation at every initial data in negative Sobolev spaces. 
This result covers a gap left open in a paper of Christ, Colliander, and Tao (2003) and extends the result by Oh, Tzvetkov, and the second author (2019) to non-cubic integer nonlinearities. 
In particular, for some low dimensional cases, we obtain norm inflation above the scaling critical regularity. We also prove ill-posedness for NLW, via norm inflation at general initial data, in negative regularity Fourier-Lebesgue and Fourier-amalgam spaces.
\end{abstract}

\maketitle
%\tableofcontents

\section{Introduction}

We consider the Cauchy problem of the following nonlinear wave equation (NLW):
\begin{equation}
\begin{cases}\label{NLW1}
\dt^{2}u-\Delta u = \pm u^{k} \\
(u,\dt u)|_{t = 0} = (u_0,u_1), 
\end{cases}
\qquad ( t, x) \in \R \times \M, 
\end{equation}
\noi
where $\M=\T^d$ or $\R^d$ ($d\geq 1$) and $k \geq 2$ is an integer.

Our goal in this paper is to study ill-posedness of \eqref{NLW1} in negative Sobolev spaces.
In this regard, we recall the critical regularity associated to \eqref{NLW1} posed on $\M=\R^{d}$.
%The well-posedness of NLW has been extensively studied within the context of the $L^{2}$-based Sobolev spaces $\H^{s}(\M):=H^{s}(\M)\times H^{s-1}(\M)$. 
First, NLW~\eqref{NLW1} has the following scaling symmetry: given $\ld>0$, if $u$ solves \eqref{NLW1}, then $u_{\ld}(t,x)=\ld^{\frac{2}{k-1}}u(\ld t,\ld x)$ also solves \eqref{NLW1} with rescaled initial data $\ld^{\frac{2}{k-1}}(u_0 (\ld x), u_1 (\ld x))$. This scaling leaves the $\dot{H}^{s_{\text{scaling}}(d,k)}(\R^d)$-norm invariant,
where
\begin{align}
s_{\text{scaling}} (d,k) :=\frac{d}{2}-\frac{2}{k-1}.
\label{scaling}
\end{align}
Secondly, \eqref{NLW1} is invariant under the Lorentz transformation (conformal symmetry), which gives rise to the critical regularity $s_{\text{conf}}(d,k) := \frac{d+1}4-\frac1{k-1}$; see~\cite{LiSo}.
In addition, we need the condition $s \ge 0$ in order for the nonlinearity to make sense as a distribution.
Hence, the critical regularity of \eqref{NLW1} is given by
\begin{align}
\begin{split}
s_{\text{crit}}(d,k)&=\max( s_{\text{scaling}}(d,k),s_{\text{conf}}(d,k),0) \\
&= \max\bigg( \frac d2 - \frac{2}{k-1}, \frac{d+1}{4}-\frac{1}{k-1}, 0 \bigg).
\end{split} 
\label{scrit}
\end{align}
The purpose of the critical regularity for \eqref{NLW1} on $\R^d$ is that we expect (local-in-time) well-posedness in $H^s(\R^d)$ when $s>s_{\text{crit}}(d,k)$ and ill-posedness, due to some instability, when $s<s_{\text{crit}}(d,k)$. This heuristic provided by \eqref{scrit} is also instrumental in the well-posedness theory of \eqref{NLW1} on periodic domains $\M=\T^{d}$, despite the lack of scaling and conformal symmetries in this setting. 

We now survey the well-posedness theory for \eqref{NLW1}, specifically restricting our attention to local-in-time results. 
Well-posedness of \eqref{NLW1} above the critical regularity $s_{\text{crit}}(d,k)$ was studied in \cite{Kap, LiSo, KeelTao, Tao}.
Moreover, ill-posedness of \eqref{NLW1} below the critical regularity has been studied in \cite{Lin1, Lin2, CCT, Leb, BTz1, Xia, Tz1, OOTz}.
In particular, %for the context of the real-valued NLW,
Christ, Colliander, and Tao \cite{CCT} proved norm inflation for \eqref{NLW1} on $\R^d$ when\footnote{They considered the nonlinearity $\pm |u|^{k-1}u$ instead of $\pm u^k$. Moreover, they proved that the solution map
fails to be uniformly continuous when $s<0$ and for the defocusing case. See
also \cite{LiSo} for the ill-posedness result in the focusing case.}: (i) $k\geq 2$ and 
\begin{align*}
s_{\text{scaling}}(1,k)<s< \frac 12 - \frac 1k,
\end{align*}
and (ii) for either odd integer $k\geq 3$ or $k\geq k_0+1$ for integer $k_0 >\frac{d}{2}$ and 
\begin{align*}
s\leq -\frac{d}{2}\quad \text{or}\quad  0<s<s_{\text{scaling}}(d,k).
\end{align*}
Applying the argument in \cite[Corollary 7]{CCT}, which uses the finite speed of propagation for \eqref{NLW1} to deduce norm inflation in dimension $d\geq 2$ from norm inflation in $d=1$, the result of (ii) extends to norm inflation for any $k\geq 2$, $s\leq -\frac{1}{2}$, and $s<s_{\text{scaling}}(1,k)$.
%\begin{align}
%s\leq -\frac{d}{2}\quad \text{or}\quad  0<s<s_{\text{scaling}}(d,k) \quad \text{or} \quad s_{\text{scaling}}(1,k)<s< \frac 12 - \frac 1k. \label{CCTrange}
%\end{align}
Here, norm inflation (at the trivial initial condition $(u_0,u_1)=(0,0)$) means that given any $\eps>0$, there exists a solution $u_\eps$ to \eqref{NLW1} and $t_\eps \in (0,\eps)$ such that
\begin{align}
 \| (u_\eps(0), \dt u_\eps(0)) \|_{\H^s(\M)} < \eps \qquad \text{ and } 
\qquad \| u_\eps(t_\eps)\|_{H^s(\M)} > \eps^{-1}, 
\label{NI1}
\end{align}
where $\H^s (\M) := H^s(\M) \times H^{s-1}(\M)$.
This phenomenon is a stronger notion of ill-posedness than the discontinuity of the solution map at zero.
In particular, the result in \cite{CCT} leaves open the question of norm inflation for NLW~\eqref{NLW1} when 
\begin{align}
-\frac 12 <s< \min (s_{\text{scaling}}(1,k),0). \label{gap}
\end{align}
In the context of \eqref{NLW1} on $\T^{3}$ for $0<s<s_{\text{scaling}}(3,k)$, Xia~\cite{Xia} generalized \eqref{NI1} to norm inflation based at general initial data (see \eqref{NI2} below).
In \cite{OOTz}, Oh, Tzvetkov, and the second author proved norm inflation at general initial data for the cubic NLW ($k=3$) when $d\geq 2$ and $s<0$.\footnote{We point out that this norm inflation result was proved as a basic ingredient for the
 main purpose of the paper~\cite{OOTz}; namely, to study the approximation property of solutions to the renormalized cubic NLW on $\T^2$ with rough, random initial data distributed according to the Gaussian free field.} For the particular case $k=3$ and $d\geq 2$, this result extends the norm inflation at zero in \cite{CCT} to norm inflation at general initial data.

Our aim in this paper is to prove norm inflation at general initial data for \eqref{NLW1} in negative Sobolev spaces, thus filling the remaining gap left open in \eqref{gap}.
The following is our main result.

\begin{theorem}\label{THM:NI}
Given $d \in \NB$, 
let  $\M = \R^d$ or $\T^d$.
Suppose that $k \ge 2$ is an integer and $s<0$.
Fix $(u_0, u_1) \in \H^s(\M)$.
Then, 
given any $\eps > 0$, 
there exist a solution $u_\eps$ to \eqref{NLW1} on $\M$
and $t_\eps  \in (0, \eps) $ such that 
\begin{align}
 \| (u_\eps(0), \dt u_\eps(0))  - (u_0, u_1) \|_{\H^s(\M)} < \eps \qquad \text{and} 
\qquad \| u_\eps(t_\eps)\|_{H^s(\M)} > \eps^{-1}.
\label{NI2}
\end{align}
\end{theorem}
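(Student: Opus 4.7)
The overall plan is a concentration-plus-finite-speed-of-propagation scheme in the spirit of \cite{CCT, OOTz}, refined by a high-frequency modulation in the perturbation so as to reach the full range $s<0$ rather than only $s < s_{\text{scaling}}(d,k)$.

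First I would approximate $(u_0, u_1)$ by a smooth pair $(v_0, v_1)$ with $\|(u_0, u_1) - (v_0, v_1)\|_{\H^s(\M)} < \eps/2$. The NLW~\eqref{NLW1} with data $(v_0, v_1)$ admits a unique local smooth solution $v$. Choose a base point $x_0 \in \M$ with $\mathrm{dist}(x_0, \supp v_0 \cup \supp v_1) \ge \rho$ for some $\rho > 0$ (on $\T^d$, first further modify $(v_0, v_1)$ to vanish on a small ball $B_\rho(x_0)$, at a small additional $\H^s$ cost, using that a smooth function truncated in a shrinking ball has vanishing negative Sobolev norm). By finite speed of propagation, $v \equiv 0$ on the backward light cone from $(t, B_\delta(x_0))$ for $t \le \rho - \delta$, so it suffices to find $(\phi_0, \phi_1)$ supported in $B_\delta(x_0)$ with $\|(\phi_0, \phi_1)\|_{\H^s} < \eps/2$ such that the NLW solution $w$ with data $(\phi_0, \phi_1)$ satisfies $\|w(t_\eps)\|_{H^s} > \eps^{-1}$ for some $t_\eps \in (0, \min(\eps, \rho - \delta))$; then $u_\eps := v + w$ has the desired inflation.

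For the perturbation I take $\phi_1 \equiv 0$ and
\[
\phi_0(x) = A\, \Phi\!\left(\tfrac{x-x_0}{\delta}\right)\cos(N e_1 \cdot x),
\]
with $\Phi \in C_c^\infty(\R^d)$ a fixed nonnegative bump, $A$ large, and $N \gg \delta^{-1}$. (For odd $k$, the single cosine is replaced by a sum of $k$ cosines at frequencies $N\xi_1,\dots,N\xi_k$ chosen so that some signed sum $\sum_j \sigma_j \xi_j$ with $\sigma_j \in \{\pm 1\}$ vanishes, guaranteeing a nonzero zero-frequency piece in the $k$-fold product.) A direct Fourier computation gives $\|\phi_0\|_{H^s} \sim A\delta^{d/2} N^s$. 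Writing $w_L(t) = \cos(t|\nabla|)\phi_0$ and expanding Duhamel, the leading nonlinear correction on times $t \ll \delta$ is
\[
w^{(1)}(t) := \pm \int_0^t \frac{\sin((t-\tau)|\nabla|)}{|\nabla|}\, w_L(\tau)^k\, d\tau.
\]
On this time scale $w_L(\tau,x) \approx A\,\Phi\!\left(\tfrac{x-x_0}{\delta}\right)\cos(N e_1\cdot x)\cos(N\tau)$, so $w_L(\tau)^k$ has a low-frequency, time-averaged component of order $A^k \Phi^k((x-x_0)/\delta)$; the two time-integrations in the wave Duhamel produce a contribution to $w^{(1)}(t)$ whose $H^s$-norm is of order $t^2 A^k \delta^{d/2 - s}$.

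The conditions $A\delta^{d/2} N^{s} \lesssim \eps$ and $t_\eps^2 A^k \delta^{d/2 - s} \gtrsim \eps^{-1}$, with $t_\eps \le \eps$ and $t_\eps \ll \delta$, leave enough freedom to find $(A, N, \delta, t_\eps)$ for every $s < 0$, every integer $k \ge 2$, and every $d \ge 1$; the decisive gain over the pure concentration ansatz of \cite{CCT} is the small factor $N^s$ from the high-frequency modulation, which lowers $\|\phi_0\|_{H^s}$ without weakening the nonlinear effect. The main obstacle I expect is the error control for the Picard remainder $w - w_L - w^{(1)}$, which must be negligible in $H^s$ compared with $t_\eps^2 A^k \delta^{d/2 - s}$ on $[0, t_\eps]$ despite $w_L$ being large in $L^\infty$; this is handled by a Gronwall/energy argument using $t_\eps \ll \delta$ (wave dispersion has not yet acted) and the $t^2$ gain from each wave Duhamel iterate. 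A secondary technical point is the construction of the resonant frequency configuration in the odd-$k$ case, where one must exhibit an explicit $(\xi_1, \dots, \xi_k)$ with a nonzero combinatorial coefficient for the zero-frequency output of the $k$-fold product.
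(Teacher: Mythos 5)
Your scheme is built on the same inflation mechanism as the paper's: a spatially concentrated perturbation modulated at frequency $N$, small in $H^s$ thanks to the factor $N^s$, whose second Picard iterate produces a non-oscillating low-frequency output of size $\sim t^2 A^k$ (your amplitude $A$ corresponds to the paper's $RA^d$, your width $\delta$ to the inverse of the paper's cube size, and in the end one may take $\delta\sim 1$, just as the paper fixes its $A=10$). What is genuinely different is your treatment of the background data: you decouple $(\phi_0,0)$ from a smooth, compactly supported approximation $(v_0,v_1)$ by finite speed of propagation, whereas the paper expands the solution around the full data $\vec u_0+\vec\phi_n$ and estimates the mixed terms directly (Lemma~\ref{LEM:multlin2}); your reduction is legitimate (the paper's caveat about \cite[Corollary 7]{CCT} concerns transferring inflation between dimensions, not this support-separation argument), and it would even remove the need for translation invariance in handling the background part. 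Note also that the paper's frequency set $\Sigma=\{\pm Ne_1,\pm 2Ne_1\}$ handles every integer $k\ge2$ at once, which is a clean implementation of your ``odd $k$'' fix.

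Two points in your outline are genuine gaps. First, your constraint list is incomplete. The low spatial frequencies of $w^{(1)}$ receive, besides the time-resonant contribution of size $\sim t^2$, non-resonant contributions oscillating in time at rate $\sim N$ and of size $\sim \delta N^{-1}$, living in the \emph{same} Fourier region; to justify $\|w^{(1)}(t)\|_{H^s}\gtrsim t^2A^k\delta^{d/2-s}$ you must also impose $t\gg(\delta/N)^{1/2}$ — exactly the condition $(AN)^{-1/2}\ll T$ of Proposition~\ref{PROP:INSTAB}, which the paper singles out as the new subtlety of NLW compared with NLS — together with $t^2A^{k-1}\ll1$ so that the higher Picard iterates are dominated. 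Both are compatible with your other conditions, but the parameter check cannot be called routine until they are included. (Also, $\delta^{d/2-s}$ is the correct size only for $s>-\frac d2$; for $s\le-\frac d2$ it saturates at $\delta^{d}$ up to a logarithm, which is harmless once $\delta\sim1$ but falsifies your general-$(A,N,\delta,t)$ bookkeeping.) Second, and more seriously, the proposed ``Gronwall/energy'' control of $w-w_L-w^{(1)}$ does not close as stated: in $H^s$, $s<0$, the nonlinearity cannot be estimated at all; in $H^\sigma$, $\sigma>\frac d2$, the perturbation has norm $\sim AN^\sigma$, so the existence time and remainder bound degenerate as $N\to\infty$; and $L^\infty$ is not propagated by the free wave evolution (nor, for $d\ge4$, by the Duhamel operator), so an $L^\infty$ bootstrap is not available either. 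One needs a norm that is blind to the modulation $N$, is an algebra, and in which the Duhamel operator gains $t^2$; the Wiener algebra $\FL^1$, with $\|\phi_0\|_{\FL^1}\sim A$ uniformly in $N$, is the natural (and the paper's) choice, via Lemma~\ref{LEM:local} and the $\FL^\infty$-plus-Fourier-support bounds of Lemma~\ref{LEM:multlin2}. Replacing your energy argument by such an $N$-uniform algebra estimate is essential, not a technicality; with that substitution and the added constraints above, your route does go through.
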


Theorem~\ref{THM:NI} thus closes the remaining gap in \eqref{gap} and, in the case $s<0$ and $k\neq 3$ (in view of \cite{OOTz}), extends the result in \cite{CCT} to norm inflation based at any initial condition.
When $(u_0,u_1)=(0,0)$, Theorem \ref{THM:NI} is reduced to the usual norm inflation at zero initial data stated in \eqref{NI1}.
As a corollary to Theorem \ref{THM:NI}, we obtain that the solution map to \eqref{NLW1}: $(u_0,u_1) \in \H^s(\M) \mapsto (u,\dt u) \in C([-T,T];\H^s(\M))$ is discontinuous everywhere in $\H^s(\M)$, for $s<0$.

We now describe two approaches to proving norm inflation for Cauchy problems. The first is the approach used in \cite{CCT} which is based on studying low-to-high energy transfer in the associated dispersionless (ODE) model and scaling analysis. By avoiding the scaling analysis, Burq and Tzvetkov \cite{BTz1} proved norm inflation as in \eqref{NI1} for the cubic NLW on three-dimensional compact Riemannian manifolds when $0<s<s_{\text{scaling}}(3,3)$.
In particular, the argument in \cite{Xia} is also based on this method.
The second method is a Fourier analytic approach introduced by Bejenaru and Tao~\cite{BT} and developed further by Iwabuchi and Ogawa \cite{IO}; see also~\cite{Kishimoto, O17}.
%split into new paragraph here?
Our proof of Theorem~\ref{THM:NI} uses this method and follows the presentation by Oh~\cite{O17}, which we now briefly describe.
We begin with a reduction: we may assume the initial data $(u_0,u_1)$ are sufficiently regular by a density argument. 
 The key idea is to express a solution $u_{\eps}$ to \eqref{NLW1} in terms of a power series expansion in the initial data and to show that one of the terms in the expansion dominates all the others. 
  More specifically, we write $u_{\eps}$ as the following power series expansion:
\begin{align} 
u_{\eps}= \sum_{j=0}^{\infty} \Xi_{j}(u_{\eps}(0),\dt u_{\eps}(0)), \label{psexp}
\end{align}
where $\{\Xi_{j}\}_{j=0}^{\infty}$ are multilinear operators in the linear solution $S(t)(u_{\eps}(0),\dt u_{\eps}(0))$ of (increasing) degree $kj+1$. They are precisely the successive new terms added to a Picard iteration expansion of $u_{\eps}$. 
We define the initial data for $u_{\eps}$ by
\begin{align}
(u_{\eps}(0),\dt u_{\eps}(0))=(u_0, u_1)+(\phi_{0,\eps},\phi_{1,\eps}), \label{shiftdata}
\end{align}
 where the perturbations $(\phi_{0,\eps},\phi_{1,\eps})$ are chosen so that:
\begin{enumerate}[(i)]
\item $(\phi_{0,\eps},\phi_{1,\eps})$ converges to $(0,0)$ in $\H^{s}(\M)$, as $\eps\rightarrow  0$,
\item there exists times $t_{\eps}\rightarrow 0$ as $\eps\rightarrow 0$ such that the second Picard iterate $\Xi_{1}(\phi_{0,\eps},\phi_{1,\eps})$ dominates in \eqref{psexp}; namely, 
\begin{align*}
\| u_{\eps}(t_{\eps})\|_{H^{s}(\M)} \ges \big\| \Xi_{1}(\phi_{0,\eps},\phi_{1,\eps})(t_{\eps}) \big\|_{H^{s}(\M)} \rightarrow \infty,
\end{align*} 
as $\eps \rightarrow 0$.
\end{enumerate} 
 These ingredients then lead to norm inflation based at $(u_0,u_1)$ as in \eqref{NI2}. The mechanism responsible for the instability in (ii) is the high-to-low transfer of energy, which is specifically exploited by the choice of $(\phi_{0,\eps},\phi_{1,\eps})$. 
 We note that although we work in rough topologies, the functions $u_{\eps}$ are smooth and hence there is no issue in making sense of the power series expansion \eqref{psexp}. 
% Indeed, following~\cite{O17}, we justify \eqref{psexp} within the Wiener algebra.
In \cite{O17}, the operators $\Xi_{j}$ are indexed using trees, which allows to directly treat the nonlinear estimates without an induction. 

As it is based on exploiting high-to-low energy transfer in the nonlinearity, the Fourier analytic approach works well in negative Sobolev spaces. 
Indeed, for the case of nonlinear Schr\"{o}dinger equations (NLS), this method was used in \cite{IO, Kishimoto, O17} to fill a similar gap left in \cite{CCT} of norm inflation for NLS in negative Sobolev spaces. 
 However, it does rely on the translation invariance of the underlying space $\mathcal{M}$, making it  unsuitable for the case of more general domains. 
See also \cite{M, CDS, AC, CK} for ill-posedness results of NLS. The idea of exploiting a high-to-low energy transfer was used in \cite{BTz} to prove failure of $C^{2}$-smoothness of the solution map for the BBM equation.

%We use the second approach Fourier analytic approach to prove Theorem \ref{THM:NI} as in \cite{IO, Kishimoto, O17}.
%More precisely, we express the solution $u$ as a power series expansion in terms of the initial data $(u_0,u_1)$.
%Then, we show that the first multilinear term in the expansion dominates all the others by exploiting the high-to-low energy transfer in the nonlinearity.
%This method appeared first in the work by Bejenaru and Tao \cite{BT}.
% developed this idea further for proving ill-posedness for a wider class of equations.
%See also \cite{Kishimoto, Ok}.
%For the proof of Theorem \ref{THM:NI}, we employ the power series expansion indexed by trees introduced by Oh \cite{O17}.
%By using such a power series expansion, we can directly treat the nonlinear estimates without an induction.

For $k\in \{2,3,4\}$ in $d=1$ and $k=2$ in $d=2,3$, Theorem~\ref{THM:NI} yields norm inflation at general initial data above the scaling critical regularity $s_{\text{scaling}} (d,k)$ defined in \eqref{scaling}. 
This phenomenon of norm inflation above the scaling critical regularity has also been observed for the cubic fractional NLS~\cite{CP} and quadratic NLS~\cite{IO, Kishimoto, Ok}. In this regime, it is essential to exploit resonant interactions in the nonlinearity.
In the aforementioned papers, the choice of the initial data $(\phi_{0,\eps},\phi_{1,\eps})$ in \eqref{shiftdata} 
(with $(u_0,u_1)=(0,0)$) 
only activates (nearly) resonant contributions in the second Picard iterate $\Xi_{1}(\phi_{0,\eps},\phi_{1,\eps})$. However, for the case of NLW~\eqref{NLW1}, our analysis of the second Picard iterate is more subtle since our choice of perturbation $(\phi_{0,\eps},\phi_{1,\eps})$ requires us to also handle nonresonant contributions. To show that the resonant part is dominant, we need to take the existence time a bit longer. See Proposition \ref{PROP:INSTAB}. 

We also note that the argument in \cite[Corollary 7]{CCT} is not applicable for deducing norm inflation at general initial data in dimensions $d\geq 2$ from norm inflation at general initial data in dimension $d=1$. Thus, we cannot simply deduce Theorem~\ref{THM:NI} from the corresponding result in one dimension.

\begin{remark}\rm
In this paper, we focus on the estimate for the norm of $u$.
However, by the same argument as in Section \ref{SEC:3} below, we can show the
growth of the norm of $\dt u$ as well as of $u$ in \eqref{NI2}:
\begin{align*}
\| \dt u_\eps (t_\eps) \|_{H^{s-1}} > \eps^{-1}.
\end{align*}
See Remark~\ref{RMK:dt} for more details.
\end{remark}

\begin{remark}\rm
By a straightforward modification, the same norm inflation result as in Theorem \ref{THM:NI} holds for the following nonlinear Klein-Gordon equation:
\begin{equation}
\begin{cases}\label{NLKG}
\dt^{2}u-\Delta u + u = \pm u^{k} \\
(u,\dt u)|_{t = 0} = (u_0,u_1), 
\end{cases}
\qquad ( t, x) \in \R \times \M.
\end{equation}
See Remark \ref{REM:NLKGlb} for a further discussion.
\end{remark}

\begin{remark}\rm
Theorem~\ref{THM:NI} completes the ill-posedness theory for NLW~\eqref{NLW1} in negative regularities $s<0$. The situation however is not complete in positive regularities. In particular, for $d\geq 2$ and $0<s<s_{\text{conf}}(d,k)$, the focusing NLW (corresponding to the $+\,$sign in \eqref{NLW1}) on $\R^d$ has explicit solutions with arbitrarily small $\H^{s}(\R^d)$-norm which blow-up in arbitrarily small time; see~\cite{LiSo} and \cite[Exercise 3.67]{TAO}. In contrast, it is not known if there are similarly behaving blow-up solutions in the defocusing case (the $-\,$sign in \eqref{NLW1}) when $s_{\text{scaling}}(d,k)<s<s_{\text{conf}}(d,k)$.
\end{remark}

\begin{remark}\rm
There are also other approaches to proving ill-posedness results, for instance, the work of Lebeau~\cite{Leb} and Carles and collaborators~\cite{AC, CDS, CK, BC}. The results they obtain are stronger than norm inflation at zero and demonstrate a loss of regularity:  there are smooth solutions for which the norm in the second expression in \eqref{NI2} can be replaced by $H^{\sigma}$ for any (or some) $\sigma \in \R$.
Kishimoto~\cite{Kishimoto} also proves infinite loss of regularity results using the Fourier analytic approach. It would be of interest to investigate this loss of regularity phenomenon for the NLW~\eqref{NLW1} in negative Sobolev spaces.
\end{remark}

While our main interest in this paper is to close the gap in \eqref{gap} and extend previous results to norm inflation at general initial data in $\H^{s}(\M)$, our proof admits an easy extension to more general spaces of functions and hence, to a more general result than that in Theorem~\ref{THM:NI}. 

We now introduce these spaces, which are the Fourier-Lebesgue and Fourier-amalgam spaces.
We use $\mathcal{S}(\M)$ to denote the space of Schwartz functions if $\M=\R^d$ or the space of $C^{\infty}$-functions if $\M=\T^{d}$.
Given $s\in \R$ and $1\leq q\leq \infty$, we define the Fourier-Lebesgue space $\F L^{s,q}(\M)$ as the closure of $\mathcal{S}(\M)$ under the following norm:
\begin{align*}
\|f \|_{\F L^{s,q}(\M)} = \big\| \jb{\xi}^s \ft f \big\|_{L^{q}(\ft \M)},
\end{align*}
where $\jb{\, \cdot \,}:=(1+|\cdot|^2)^{\frac{1}{2}}$ and $\ft \M$ denotes the Pontryagin dual of $\M$, 
i.e.,~
\begin{align}
\ft \M = \begin{cases}
 \R^d & \text{if }\M = \R^d, \\
 \Z^d & \text{if } \M = \T^d.
\end{cases}
\label{dual}
\end{align}

\noi
When $\ft \M = \Z^d$, 
we endow it with the counting measure. We note that $\FL^{s,2}(\M)=H^{s}(\M)$, and $\FL^{s,q_1}(\T^d)\subseteq \FL^{s,q_2}(\T^d)$ when $1\leq q_1 \leq q_2\leq \infty$. This last property implies that the spaces $\FL^{s,q}(\T^{d})$ with $q>2$ are wider than $H^{s}(\T^d)$, for any $s\in \R$.
% This fact has been a motivation for many authors to study the well-posedness of nonlinear dispersive PDE with initial data outside of the $L^2$-based Sobolev spaces.  \note{Selection of references: Grunrock, Grunrock-Vega, Oh-Wang, Deng-Nahmod-Yue, Chapouto? Are there any well-posedness results for NLW in Fourier-Lebesgue spaces?}
On the Euclidean spaces there is, in general, no relation between the spaces $\FL^{s,q}(\R^d)$ for fixed $s\in \R$. 
The Fourier-Lebesgue spaces are encapsulated within a wider class of function spaces known as the Fourier-amalgam spaces. Given $s\in \R$ and $1\leq p,q\leq \infty$, 
we define the Fourier-amalgam space $\fw^{p,q}_{s}(\R^d)$ as the closure of $\mathcal{S}(\R^d)$ under the norm
\begin{align}
\|f\|_{\fw^{p,q}_{s}(\R^d)}:=\big\| \jb{n}^{s} \| \chi_{n+Q}(\xi)\ft f(\xi)\|_{L_{\xi}^{p}(\R^d)}\big\|_{\l^q_{n}(\Z^d)},
\label{fw}
\end{align}
where $Q:=[-\frac{1}{2},\frac{1}{2})^{d}$ and $\chi_{n+Q}$ is the indicator function on the set $n+Q$, for $n\in \Z^d$. In the special case $p=q$, we have $\fw^{q,q}_{s}(\R^d)=\FL^{s,q}(\R^d)$. Moreover, when $p=2$, $\fw^{2,q}_{s}(\R^d)=M^{2,q}_{s}(\R^d)$, where $M^{2,q}_{s}(\R^d)$ is a modulation space. We point out that the Fourier-amalgam spaces $\fw^{p,q}_{s}(\R^d)$ are the Fourier image of the classical weighted Wiener-amalgam spaces $W(L^p,L_{s}^q)$; see for example \cite{Fei}.
Our interest in the Fourier-amalgam spaces rests solely in the Euclidean space setting where $\M=\R^d$, since in the periodic case, we have
\begin{align*}
\fw^{p,q}_{s}(\T^d)=M^{2,q}_{s}(\T^d)=\FL^{s,q}(\T^d), %\label{Tdfw}
\end{align*}
for any $1\leq p,q \leq \infty$ and $s\in \R$. 

\begin{remark}\rm
In \cite{IO}, Iwabuchi and Ogawa use the space $(M^{2,1})_A(\R^d)$ defined through the following norm
\begin{align*}
\|f\|_{(M^{2,1})_A (\R^d)}:=\sum_{ n \in A\Z^d}\|\chi_{n+Q_{A}}(\xi)  \ft f(\xi) \|_{L^{2}_{\xi}(\R^d)},
\end{align*}
where $Q_{A}:=[-\frac{A}{2},\frac{A}{2})^{d}$, and $A>0$ is a dyadic number, to verify the convergence of the power series expansions. These spaces satisfy $(M^{2,1})_{A}(\R^d) \simeq_{A} (M^{2,1})_1 (\R^d)$. We note that $(M^{2,1})_1(\R^d)=\fw_{0}^{2,1}(\R^d)$, and the algebra property of $(M^{2,1})_{A}(\R^d)$ can be viewed as a consequence of the algebra property of the spaces $\fw^{p,1}_{0}(\R^d)$, for $1\leq p \leq \infty$.
\end{remark}

As in the $L^{2}$-based case above, we can associate to the NLW~\eqref{NLW1} the scaling critical regularity 
\begin{align}
s_{\text{scaling}}(d,k,q):=d\bigg( 1-\frac{1}{q}\bigg)-\frac{2}{k-1}, \label{scalingFLq}
\end{align}
in the Fourier-Lebesgue spaces $\FL^{s,q}(\M)$. Unfortunately, the Fourier-amalgam spaces do not interact well with respect to scaling. However, H\"{o}lder's inequality implies $\fw^{p,q}_{s}(\R^d)$ is a weaker norm than $\FL^{s,q}(\R^d)$ when $p\leq q$ and for any $s\in \R$, so we expect \eqref{scalingFLq} to essentially be a scaling critical regularity for \eqref{NLW1} in Fourier-amalgam spaces $\fw^{p,q}_{s}(\R^d)$.

There are very few well-posedness results for NLW~\eqref{NLW1} in these spaces. 
In Fourier-Lebesgue spaces, 
well-posedness for the quadratic NLW ($k=2$ in \eqref{NLW1}) in $\FL^{s,q}(\R^{2})$ for $2\leq q<\infty$ and $s>1+\frac{3}{2q'}$ follows from \cite[Equation (9)]{GrigT}; in fact, the main results in \cite{Grunrock, GrigN, GrigT} are for the quadratic derivative NLW in $\FL^{s,q}(\R^d)$ for $d=2,3$.
The well-posedness study of nonlinear dispersive equations in Fourier-amalgam spaces was recently initiated by the first author and T.~Oh~\cite{FO}, in the context of the one-dimensional cubic NLS on $\R$, and to the best of the authors' knowledge, there are no well-posedness results for NLW~\eqref{NLW1} in Fourier-amalgam spaces. As for ill-posedness results,
norm inflation in Fourier-Lebesgue spaces for NLS-type equations have been obtained in \cite{CK, Kishimoto, BC} and also in modulation spaces in \cite{BC}. For NLW~\eqref{NLW1}, our second main result is norm inflation at general initial data in negative regularity Fourier-Lebesgue and Fourier-amalgam spaces.

\begin{theorem}\label{THM:FLq}
Let $d \in \NB$ and suppose that $k \ge 2$ is an integer. Given $1\leq p, q\leq \infty$ and $s<0$, we define
\begin{align}
X^{p,q}_{s}(\M):=
\begin{cases} 
      \fw^{p,q}_{s}(\R^d) & \text{if} \,\,\, \,\M=\R^d,  \\
      \FL^{s,q}(\T^d) & \text{if}\,\,\,\, \M=\T^d \,\,\,  \text{and}\,\,\, \,p=q.
   \end{cases} \label{Xspace}
\end{align}
Fix $(u_0, u_1) \in X^{p,q}_{s}(\M)$.
Then, 
given any $\eps > 0$, 
there exist a solution $u_\eps$ to \eqref{NLW1} on $\M$
and $t_\eps  \in (0, \eps) $ such that 
\begin{align}
 \| (u_\eps(0), \dt u_\eps(0))  - (u_0, u_1) \|_{X^{p,q}_{s}(\M)} < \eps \qquad \text{and} 
\qquad \| u_\eps(t_\eps)\|_{X^{p,q}_{s}(\M)} > \eps^{-1}.
%\label{NI2ft}
\notag
\end{align}
\end{theorem}

In the body of this paper, we detail the proof of Theorem~\ref{THM:NI}. The proof of Theorem~\ref{THM:FLq} follows from minor modifications of the aforementioned proof and these details are contained in Section~\ref{app:amalgam}.
Due to the local-in-time nature of the analysis in this paper, the sign of the nonlinearity in \eqref{NLW1} does not play any role.
Hence, we only consider the $+\,$sign in the following.
Moreover, in view of the time reversibility of the equation, we focus only on positive times. 

%In Section \ref{SEC:2}, we discuss the local well-posedness of \eqref{NLW1} in the Fourier-Lebesgue space and justify that such solutions can be represented by a power series expansion indexed by trees.
%In Section \ref{SEC:3}, we present the proof of Theorem \ref{THM:NI}.

\section{Power series expansion indexed by trees}
\label{SEC:2}

In this section, we show the well-posedness in the Fourier-Lebesgue space and exploit power series expansions.
We define
\begin{align}
\FLv^{s, q}(\M) := \F L^{s,q}(\M) \times \F L^{s-1,q}(\M),
%\label{FL}
\notag
\end{align}
and, for convenience, write $\FL^q (\M) := \FL^{0,q}(\M)$ and $\FLv^q (\M) := \FLv^{0,q}(\M)$.

Let $S(t)$ denote the linear wave propagator:
\begin{align}
S(t)(\ve{u}_0)=  S(t)(u_0,u_1)=\cos(t|\nb|) u_0+\frac{\sin (t|\nb|)}{|\nb|} u_1
\label{linsol}
\end{align}
and let $\IS$ denote the $k$-linear Duhamel operator 
\begin{align}
\IS[u_1,\ldots, u_k] (t)
:= \int_{0}^{t} \frac{\sin ((t-t')|\nb|)}{|\nb |}\bigg[ \prod_{j=1}^{k} u_j(t') \bigg]dt'.
\label{forcing}
\end{align}
Writing $\IS^k [u]=\IS[u,\ldots, u]$, we have the following Duhamel formulation of \eqref{NLW1}: 
\begin{align}
u(t)= S(t)(\ve{u}_{0})+\IS^k[u](t). \label{duhamel}
\end{align}
We use the convention
\begin{align*}
\frac{\sin (t |0|)}{|0|}=t.
%\label{zero}
\end{align*}
For $0\leq t \leq 1$, we have
\begin{align}
\| S(t)\vec{u}_0\|_{H^s} \leq \|u_0\|_{H^s}+t\|u_1\|_{H^{s-1}}\leq \|\vec{u}_0\|_{\H^{s}}.
\label{linestHs}
\end{align}

Second, we recall the following definitions and terminology used in \cite{O17} to describe $k$-ary trees.
 
 \begin{definition} \label{DEF:tree} \rm
(i) Given a partially ordered set $\TT$ with partial order $\leq$, 
we say that $b \in \TT$ 
with $b \leq a$ and $b \ne a$
is a child of $a \in \TT$,
if  $b\leq c \leq a$ implies
either $c = a$ or $c = b$.
If the latter condition holds, we also say that $a$ is the parent of $b$.

\smallskip 

\noi
(ii) 
A tree $\TT$ is a finite partially ordered set,  satisfying
the following properties\footnote{We do not identify two trees even if there is an
order-preserving bijection between them.}:
\begin{itemize}
\item Let $a_1, a_2, a_3, a_4 \in \TT$.
If $a_4 \leq a_2 \leq a_1$ and  
$a_4 \leq a_3 \leq a_1$, then we have $a_2\leq a_3$ or $a_3 \leq a_2$,

\item
A node $a\in \TT$ is called terminal, if it has no child.
A non-terminal node $a\in \TT$ is a node 
with exactly $k$ children,

\item There exists a maximal element $r \in \TT$ (called the root node) such that $a \leq r$ for all $a \in \TT$,

\item $\TT$ consists of the disjoint union of $\TT^0$ and $\TT^\infty$,
where $\TT^0$ and $\TT^\infty$
denote the collections of non-terminal nodes and terminal nodes, respectively.

\end{itemize}
\noi
(iii) Let $\boldsymbol{T}(j)$ denote the set of all trees with $j$ non-terminal nodes
\end{definition}

Note that a given $k$-ary tree $\TT \in \pmb{T}(j)$ has $kj+1$ nodes. This follows from the fact that the number of non-terminal and terminal nodes of $\TT$ are $j$ and $(k-1)j+1$, respectively, where $j\in \mathbb{N} \cup \{0\}$.

We have the following basic combinatorial property for $k$-ary trees. The proof is a straightforward adaptation of the one in \cite[Lemma 2.3]{O17} for ternary trees ($k=3$).

\begin{lemma}\label{lemma:counting} 
There exists a constant $C_{0}>0$ such that 
\begin{equation} \label{tjcnt}
|\pmb{T}(j)| \leq C_{0}^{j}.
\end{equation}
\end{lemma}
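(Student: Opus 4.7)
The plan is to reduce the counting to a classical Fuss--Catalan-type estimate via the recursive structure of $k$-ary trees. The key decomposition is: any $\TT\in \pmb{T}(j)$ with $j\geq 1$ is determined by its root (which must be non-terminal) together with the $k$ subtrees $\TT_{1},\ldots,\TT_{k}$ rooted at its children, where $\TT_{i}\in \pmb{T}(j_{i})$ and $j_{1}+\cdots+j_{k}=j-1$ for some non-negative integers $j_{1},\ldots,j_{k}$. Since Definition~\ref{DEF:tree} regards children as an unordered set, first arbitrarily ordering the $k$ children at each node only enlarges the count, yielding the recursive inequality
\[
|\pmb{T}(j)| \leq \sum_{\substack{j_{1},\ldots,j_{k}\geq 0 \\ j_{1}+\cdots+j_{k}=j-1}} \prod_{i=1}^{k} |\pmb{T}(j_{i})|, \qquad j\geq 1,
\]
with the base case $|\pmb{T}(0)|=1$.

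I would then package this via the formal power series $F(z):=\sum_{j\geq 0}|\pmb{T}(j)|\,z^{j}$. The above translates, coefficient by coefficient, into $F(z)\leq 1+zF(z)^{k}$, so that $|\pmb{T}(j)|\leq [z^{j}]G(z)$, where $G$ is the unique formal solution of $G(z)=1+zG(z)^{k}$ with $G(0)=1$. Lagrange inversion applied to the map $F\mapsto (F-1)/F^{k}$ gives the Fuss--Catalan formula $[z^{j}]G(z)=\frac{1}{(k-1)j+1}\binom{kj}{j}$, so that
\[
|\pmb{T}(j)| \leq \binom{kj}{j} \leq 2^{kj}.
\]
Taking $C_{0}=2^{k}$ then yields the claim.

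I do not expect any substantial obstacle: the bound is a classical combinatorial estimate, and the ternary case ($k=3$) carried out in \cite[Lemma 2.3]{O17} extends essentially verbatim. An even more elementary route, bypassing Lagrange inversion altogether, is to first bound $|\pmb{T}(j)|$ by the number of ordered $k$-ary trees with $j$ internal nodes (arbitrarily ordering children at each non-terminal node), and then to encode each such ordered tree by its depth-first traversal as a binary word in $\{0,1\}^{kj+1}$ (writing $0$ at each non-terminal node and $1$ at each terminal node). Since this encoding is injective and the tree $\TT$ has $kj+1$ nodes in total, one directly obtains $|\pmb{T}(j)|\leq 2^{kj+1}$, so $C_{0}=2^{k+1}$ also works; this avoids any appeal to generating-function machinery.
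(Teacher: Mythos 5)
Your argument is correct and follows essentially the same route as the paper, which simply adapts the proof of \cite[Lemma 2.3]{O17}: decompose a tree at its (non-terminal) root into $k$ subtrees, obtain the recursion $|\pmb{T}(j)|\leq \sum_{j_1+\cdots+j_k=j-1}\prod_i|\pmb{T}(j_i)|$, and bound the resulting Fuss--Catalan-type count by an exponential in $j$. Your remark that unordered children only make the count smaller, and your alternative traversal-encoding bound $|\pmb{T}(j)|\leq 2^{kj+1}$, are both fine and fully consistent with the intended proof.
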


 For fixed $\vphi\in \FLv^{1} (\M)$, we associate to a given tree $\mathcal{T}\in \mathbf{T}(j)$, a space-time distribution $\Psi_{\vphi}(\TT)\in \mathcal{D}'([0,T]\times \M)$ as follows: we replace a non-terminal node by the Duhamel integral operator $\I$ with its $k$ arguments as children and we replace all terminal nodes by the linear solution $S(t)\vphi$. We then define
 \begin{align}
\Xi_{j}(\vphi)=\sum_{\mathcal{T}\in \mathbf{T}(j)} \Psi_{\vphi}(\mathcal{T}).
\label{sum}
\end{align}
For example,
\begin{align*}
\Xi_{0}(\vphi)=S(t)\vphi \quad \text{and} \quad \Xi_{1}(\vphi)=\I[ S(t)\vphi, \ldots, S(t)\vphi].
\end{align*}

\noi
The multilinear operators $\Xi_{j}$ satisfy the following estimates.
We use short-hand notations such as $C_T \FL^p = C([0, T]; \FL^p(\M))$ for $T>0$. The following estimates will be used to show convergence of the power series expansion and is similar to \cite[Lemma 2.5]{O17}.

\begin{lemma}\label{LEM:Xiestimates} There exists $C>0$ such that the following hold:
Given $\ve \phi \in \FLv^{1}(\M)$, $j\in \mathbb{N}$, $\vec{\psi}\in \FLv^{1}(\M) \cap \H^{0}(\M)$, and $0<T\leq 1$, we have 
\begin{align}
\| \Xi_{j}(\ve{\phi})(T)\|_{\FL^{1}} &\leq C^{j}T^{2j}\|\ve{\phi}\|_{\FLv^{1}}^{(k-1)j+1}, \label{FL1}\\
\| \Xi_{j}(\ve{\psi})(T)\|_{\FL^{\infty}} &\leq C^{j}T^{2j}\|\ve{\psi}\|_{\FLv^{1}}^{(k-1)j-1}  \|\ve{\psi}\|_{\H^{0}}^{2}.\label{FLinfty}\
\end{align}
\end{lemma}
\begin{proof}
For $\vec{\phi}=(\phi_0,\phi_1)$, we have from \eqref{linsol} and $0<t\leq T\leq 1$,
\begin{align}
\| S(t)(\vec{\phi})\|_{C_T\FL^{1}}\leq \| \phi_0 \|_{\FL^{1}}+T\|\phi_{1}\|_{\FL^{-1,1}}\leq \|\vec{\phi}\|_{\FLv^{1}}.
\label{linest}
\end{align}
As $|\sin t|\leq |t|$ for every $t\in \R$, \eqref{forcing} and the algebra property of $\FL^{1}(\M)$ imply
\begin{align}
\| \I[u_1,\ldots, u_k]\|_{C_T \FL^1} \leq \bigg(\int_{0}^{T} (T-t) dt\bigg) \prod_{j=1}^{k}\|u_j\|_{C_T \FL^1}\leq CT^2 \prod_{j=1}^{k}\|u_j\|_{C_T \FL^1}. \label{Iesti}
\end{align}
For a fixed $\mathcal{T} \in \mathbf{T}(j)$, $\Psi_{\vec{\phi}}\,(\mathcal{T})$ is essentially $j$ iterated compositions of the operator $\I^{k}[S(t)\vec{\phi}]$, with $(k-1)j+1$ terms $S(t)\vec{\phi}$. Hence, \eqref{FL1} follows from \eqref{sum}, \eqref{tjcnt}, \eqref{Iesti}, and \eqref{linest}.
Likewise, \eqref{FLinfty} follows similarly in addition to using Young's inequality. 
\end{proof}

We now justify the power series expansion for solutions to \eqref{duhamel}.

\begin{lemma}\label{LEM:local}
Let $k\geq 2$ be an integer and $M>0$. Then, for any $0<T\ll \min (M^{-\frac{k-1}{2}},1)$ and $\vec{u}_0 \in \FLv^{1}(\M)$ with $\| \vec{u}_0 \|_{\FLv^{1}}\leq M$, the following holds:
\begin{itemize}
\item[(i)] There exists a unique solution $u\in C([0,T];\F L^{1}(\M))$ satisfying $(u,\dt u)|_{t=0}=\vec{u}_0$  to \eqref{duhamel}.
\item[(ii)] The solution $u$ in \textup{(i)} may be expressed as 
\begin{align}
u= \sum_{j=0}^{\infty}\Xi_{j}(\vec{u}_{0})=\sum_{j=0}^{\infty}\sum_{\mathcal{T}\in \mathbf{T}(j)} \Psi_{\vec{u}_{0}}(\mathcal{T}), \label{uexpand}
\end{align}
where the series converges absolutely in $C([0,T];\FL^{1}(\M))$.
\end{itemize}
\end{lemma}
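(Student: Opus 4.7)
The plan is a standard Picard iteration / Banach fixed point argument for (i), followed by verification that the tree series in (ii) converges absolutely and coincides with the unique solution produced in (i).

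For (i), I would set up the fixed point on the closed ball $B_{2M} := \{ u \in C([0,T]; \FL^1(\M)) : \|u\|_{C_T \FL^1} \leq 2M \}$ for the Duhamel map $\Phi(u) := S(\cdot)\vec{u}_0 + \IS^k[u]$. The linear bound \eqref{linest} gives $\|S(\cdot)\vec{u}_0\|_{C_T \FL^1} \leq M$, while the $k$-linear estimate \eqref{Iesti}, combined with the telescoping identity
\[
\IS^k[u] - \IS^k[v] = \sum_{i=1}^{k} \IS\bigl[\underbrace{v,\ldots,v}_{i-1},\,u-v,\,\underbrace{u,\ldots,u}_{k-i}\bigr],
\]
yields $\|\Phi(u)\|_{C_T \FL^1} \leq M + CT^2(2M)^k$ and $\|\Phi(u)-\Phi(v)\|_{C_T \FL^1} \leq CkT^2(2M)^{k-1}\|u-v\|_{C_T \FL^1}$. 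Under the hypothesis $T \ll M^{-(k-1)/2}$, both $CT^2(2M)^{k-1}$ and $CkT^2(2M)^{k-1}$ can be made $\leq \tfrac12$, so $\Phi$ is a contraction on $B_{2M}$, yielding a unique fixed point there. Unconditional uniqueness in $C([0,T];\FL^1)$ follows from a short continuity argument: any two solutions differ by $\IS^k[u]-\IS^k[v]$, and the same telescoping forces them to agree on sufficiently short subintervals, which can be iterated up to $T$.

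For (ii), the a priori bound \eqref{FL1} gives the geometric control
\[
\sum_{j=0}^{\infty} \|\Xi_j(\vec{u}_0)\|_{C_T \FL^1} \;\leq\; M \sum_{j=0}^{\infty} (CT^2 M^{k-1})^j \;<\; \infty,
\]
which converges precisely in the regime $T \ll M^{-(k-1)/2}$ of (i), so $v := \sum_{j \geq 0} \Xi_j(\vec{u}_0)$ lies in $C([0,T]; \FL^1)$. To show $v$ solves \eqref{duhamel}, I would exploit the combinatorial structure of $\mathbf{T}(j)$: for $j \geq 1$, each $\TT \in \mathbf{T}(j)$ is uniquely specified by the ordered $k$-tuple $(\TT_1, \ldots, \TT_k)$ of subtrees rooted at the children of its root, with $\TT_i \in \mathbf{T}(j_i)$ and $j_1 + \cdots + j_k = j-1$. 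By the definition of $\Psi_{\vec{u}_0}(\TT)$, this correspondence gives
\[
\Xi_j(\vec{u}_0) \;=\; \sum_{\substack{j_1,\ldots,j_k \geq 0 \\ j_1+\cdots+j_k=j-1}} \IS\bigl[\Xi_{j_1}(\vec{u}_0), \ldots, \Xi_{j_k}(\vec{u}_0)\bigr], \qquad j \geq 1.
\]
Summing over $j \geq 1$, adding $\Xi_0(\vec{u}_0) = S(t)\vec{u}_0$, and interchanging summation with the $k$-linear operator $\IS$ (justified by the absolute convergence above and the algebra property of $\FL^1$) yields $v = S(t)\vec{u}_0 + \IS^k[v]$. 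Since $\Xi_0(\vec{u}_0)|_{t=0}=\vec{u}_0$ and $\Xi_j(\vec{u}_0)|_{t=0}=\partial_t\Xi_j(\vec{u}_0)|_{t=0} = 0$ for $j \geq 1$ (each such term begins with a Duhamel integral over $[0,0]$), the initial condition $(v,\partial_t v)|_{t=0}=\vec{u}_0$ holds, and the uniqueness from (i) identifies $v$ with $u$.

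The only nonroutine point is the combinatorial identity $\Xi_j = \sum \IS[\Xi_{j_1}, \ldots, \Xi_{j_k}]$, which is the content of saying that the tree expansion is the formal Picard series for \eqref{duhamel}. It amounts to an unambiguous unpacking of Definition \ref{DEF:tree} and of the assignment $\TT \mapsto \Psi_{\vec{u}_0}(\TT)$, namely that peeling the root of any $\TT \in \mathbf{T}(j)$ with $j\geq 1$ produces exactly a Duhamel integral applied to $k$ independent subtrees. The analytic heavy lifting is already done by \eqref{linest}, \eqref{Iesti}, and \eqref{FL1}.
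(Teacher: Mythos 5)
Your proposal is correct and follows essentially the same route as the paper: a contraction argument for $\G$ on the ball $B_{2M}$ for part (i), and for part (ii) absolute convergence of the tree series via \eqref{FL1} combined with the root-peeling recursion $\Xi_j=\sum_{j_1+\cdots+j_k=j-1}\I[\Xi_{j_1},\ldots,\Xi_{j_k}]$, concluding by the uniqueness from (i). The only difference is cosmetic: the paper verifies $U=\G[U]$ by estimating $U_J-\G[U_J]$ for partial sums and invoking continuity of $\G$ (an $\eps/3$ argument), whereas you expand $\I^k[v]$ directly and interchange the infinite sums with the $k$-linear operator, which is equally justified by the absolute convergence and the boundedness of $\I$ from \eqref{Iesti}.
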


\begin{proof}
We begin with (i). We define 
\begin{align*}
\G[u](t):= S(t)(\vec{u}_0)+\I^{k}[u](t).
\end{align*}
Then, \eqref{linest} and \eqref{Iesti} imply
\begin{align*}
\|\G[u]\|_{C_T \FL^{1}}
\leq \| \vec{u}_0 \|_{\FLv^{1}}+CT^{2}\|u\|_{C_T \FL^{1}}^{k}.
\end{align*}
Thus, for $0<T\leq 1$ such that $CT^{2}M^{k-1}\ll 1$, $\G$ maps the ball $B_{2M}:=\{ v\in C([0,T];\FL^{1}(\M)) \, : \, \|v\|_{C_T \FL^{1}}\leq 2M\}$ into itself. In view of the multilinearity of $\I$, we may reduce $T$ further to ensure that $\G$ is in fact a strict contraction on $B_{2M}$. The contraction mapping theorem and an a posteriori continuity argument completes (i). We move onto verifying (ii). We fix $0<T\leq 1$ such that $CT^{2}M^{k-1}\ll 1$ and fix $\eps>0$.
From \eqref{FL1}, we see that the sum in \eqref{uexpand} converges absolutely in $C([0,T];\FL^{1}(\M))$ and hence there exists an integer $J_1 \geq 0$ such that for every $J\geq J_1$, 
\begin{align}
\| U-U_{J}\|_{C_{T}\FL^1}<\frac{\eps}{3},
\label{U1}
\end{align}
where
\begin{align*}
U_J := \sum_{j=0}^{J}\Xi_{j}(\vec{u}_0)  \quad \text{and} \quad U:=\sum_{j=0}^{\infty}\Xi_{j}(\vec{u}_0).
\end{align*}
In particular, $U, U_{J}\in B_{2M}$ for any $J\in \mathbb{N}$.
From (i), $\G$ is continuous from $B_{2M}$ into itself and hence there exists an integer $J_2 \geq 0$ such that for every $J\geq J_2$,
\begin{align}
\|\G[U_{J}]-\G[U]\|_{C_{T}\FL^1} <\frac{\eps}{3}.
\label{U3}
\end{align}
Now for a fixed integer $J\geq 1$, we consider the difference $U_J-\G[U_J]$.
We have 
\begin{align*}
U_J-\G[U_J]& = \sum_{j=1}^{J}\Xi_{j}(\vec{u}_{0})-\sum_{0\leq j_1,\ldots,j_k\leq J}\I[\Xi_{j_1}(\vec{u}_0),\ldots, \Xi_{j_{k}}(\vec{u}_0)] \\
& = -\sum_{\l=J+1}^{kJ+1}\sum_{\substack{0\leq j_1,\ldots,j_k\leq J \\ j_1+\cdots +j_k=\l-1}}\I[\Xi_{j_1}(\vec{u}_0),\ldots, \Xi_{j_{k}}(\vec{u}_0)].
\end{align*}
Using \eqref{Iesti}, \eqref{FL1}, and crudely estimating the sums, we obtain
\begin{align*}
\| U_J-\G[U_J]\|_{C_T \FL^1}
&\leq CT^{2} \sum_{\l=J+1}^{kJ+1} \sum_{\substack{0\leq j_1,\ldots,j_k\leq J \\ j_1+\cdots +j_k=\l-1}} \prod_{m=1}^{k} \|\Xi_{j_m}(\vec{u}_0)\|_{C_T \FL^1} \\
& \leq CT^{2}M^{k}J^{k}\sum_{\l=J+1}^{\infty}(CT^{2}M^{k-1})^{\l-1} \\
& \leq CM J^{k}(CT^{2}M^{k-1})^{J}.
\end{align*}
Thus, there exists an integer $J_3 \geq 0$ such that for every $J\geq J_3$,
\begin{align}
\| U_{J}-\G[U_{J}]\|_{C_T \FL^1} <\frac{\eps}{3}. 
\label{U2}
\end{align}
With $J:=\max_{\l=1,2,3} J_{\l}$, \eqref{U1}, \eqref{U3}, and \eqref{U2} imply  
\begin{align*}
\|U-\G[U]\|_{C_{T}\FL^1}&\leq \|U-U_J\|_{C_{T}\FL^1}+\| U_J-\G[U_J]\|_{C_{T}\FL^1} 
+\|\G[U_J]-\G[U]\|_{C_{T}\FL^1} \\
& <\eps.
\end{align*}
Thus, $U=\G[U]$ and, by uniqueness, we conclude $u=U$.
\end{proof}

\section{Norm inflation for NLW}
\label{SEC:3}

In this section, we present the proof of Theorem \ref{THM:NI} by establishing the following proposition.

\begin{proposition} \label{PROP:NI}
Let $\M=\R^d$ or $\T^d$, $k\geq 2$ be an integer, $s<0$, and fix $u_0,u_1\in \mathcal{S}(\M)$. Then, for any $n\in \mathbb{N}$, there exists a smooth solution $u_n$ to the NLW~\eqref{NLW1} and $t_{n}\in (0, \frac{1}{n})$ such that
\begin{align}
\|(u_n (0), \dt u_{n}(0))-(u_0,u_1)\|_{\H^{s}(\M)}<\frac 1n
\qquad \text{and} \qquad
\|u_{n}(t_n)\|_{H^{s}(\M)}>n. 
\label{nexplosion}
\end{align}
\end{proposition}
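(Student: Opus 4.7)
The plan is to follow the Fourier-analytic approach of \cite{O17} outlined after Theorem~\ref{THM:NI}. Write $(u_n(0), \dt u_n(0)) = (u_0, u_1) + \vec{\phi}_n$ for a perturbation $\vec{\phi}_n = (\phi_{0,n}, \phi_{1,n}) \in \mathcal{S}(\M) \times \mathcal{S}(\M)$ to be chosen. Since $(u_0, u_1), \vec{\phi}_n \in \FLv^{1}(\M)$, Lemma~\ref{LEM:local} yields a smooth solution $u_n$ on $[0,T_n]$ with
\begin{align*}
u_n(t) = \sum_{j=0}^{\infty} \Xi_j(\vec{u}_n(0))(t).
\end{align*}
Expanding each $\Xi_j$ by multilinearity as a sum over trees $\TT \in \mathbf{T}(j)$ with each leaf labelled by $(u_0,u_1)$ or by $\vec{\phi}_n$, the contribution $\Xi_1(\vec{\phi}_n)(t_n)$ will be shown to dominate all other terms in $H^s(\M)$ at a well-chosen time $t_n \to 0$.

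\textbf{Choice of perturbation.} On $\T^d$ (the case $\R^d$ is analogous via smooth frequency bumps), pick parameters $N = N(n) \to \infty$, $R = R(N) > 0$, and a finite symmetric set $A_N \subset \Z^d$ of frequencies of size $|\xi| \sim N$, arranged so that within the $k$-fold sumset a controlled number of paired configurations $(\xi_1, \ldots, \xi_k)$ satisfy $\xi_1 + \cdots + \xi_k = O(1)$, while all other $k$-tuples satisfy $|\xi_1 + \cdots + \xi_k| \gtrsim N$. Set $\phi_{0,n}(x) = R \sum_{\xi \in A_N} e^{i\xi\cdot x}$ and $\phi_{1,n} = 0$, so that $\|\vec{\phi}_n\|_{\H^s} \sim R |A_N|^{1/2} N^s$ can be made $\ll 1/n$, while $\|\vec{\phi}_n\|_{\FLv^1} \sim R|A_N|$.

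\textbf{Lower bound on $\Xi_1(\vec{\phi}_n)(t_n)$ (the main step).} Using $S(t)\vec{\phi}_n(x) = R\sum_{\xi \in A_N}\cos(t|\xi|) e^{i\xi\cdot x}$, the Fourier coefficient of $\Xi_1(\vec{\phi}_n)(t_n)$ at output frequency $\eta$ equals
\begin{align*}
R^{k} \sum_{\substack{\xi_1,\ldots,\xi_k \in A_N \\ \xi_1+\cdots+\xi_k = \eta}} \int_0^{t_n} \frac{\sin((t_n-t')|\eta|)}{|\eta|}\prod_{\ell=1}^{k}\cos(t'|\xi_\ell|)\,dt'.
\end{align*}
For resonant tuples with $\eta = O(1)$, the multiplier is $\approx (t_n-t')$, and the product of cosines contains a non-oscillatory (``DC'') part from the paired assignments, producing a contribution $\gtrsim t_n^2$; the remaining oscillatory components gain $O(N^{-2})$ after integration by parts in $t'$, provided $t_n N \gg 1$. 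Nonresonant tuples $|\eta| \gtrsim N$ are controlled by one integration by parts in $t'$, gaining a factor $N^{-1}$. Since the Fourier weight $\jb{\eta}^{s} \sim 1$ at low frequencies, this yields
\begin{align*}
\|\Xi_1(\vec{\phi}_n)(t_n)\|_{H^s(\M)} \gtrsim R^{k}|A_N|^{k/2} t_n^{2}.
\end{align*}
The requirement $t_n N \gg 1$ is exactly the ``slightly longer existence time'' mentioned in the introduction: unlike in the NLS setting of \cite{IO, O17}, the wave cosines must have time to average in order for the DC component to emerge above the nonresonant noise.

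\textbf{Upper bounds on the remaining terms and balancing.} The term $S(t_n)(u_0,u_1)$ is bounded in $H^s$ by the fixed $\|(u_0,u_1)\|_{\H^s}$, and $S(t_n)\vec{\phi}_n$ by $\|\vec{\phi}_n\|_{\H^s} \ll 1/n$ via \eqref{linestHs}. Mixed trees in $\Xi_j(\vec{u}_n(0))$ with at least one leaf of type $(u_0, u_1)$ are estimated using \eqref{FLinfty}, putting the $\H^0$ factor on the Schwartz datum and the $\FLv^1$ factors on the $\vec{\phi}_n$-leaves; pure-$\vec{\phi}_n$ trees with $j \geq 2$ are estimated using \eqref{FL1}. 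In both cases the embedding $\FL^1(\M) \hookrightarrow H^s(\M)$ (valid for $s \leq 0$ on $\T^d$, and with analogous reasoning on $\R^d$) together with $\sum_j C^j t_n^{2j}\|\vec{\phi}_n\|_{\FLv^1}^{(k-1)j+1}$ being geometrically summable reduces all these contributions to something much smaller than $R^k|A_N|^{k/2}t_n^2$, provided $t_n^2 \|\vec{\phi}_n\|_{\FLv^1}^{k-1} \ll 1$. It then remains to choose $N, R, |A_N|, t_n$ satisfying simultaneously (i) $R|A_N|^{1/2}N^s < 1/n$, (ii) $0 < t_n < 1/n$ with $t_n N \gg 1$, (iii) $t_n^2 (R|A_N|)^{k-1} \ll 1$, and (iv) $R^k |A_N|^{k/2} t_n^2 \gg n$. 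Since $s < 0$, the gain $N^{-s} \to \infty$ leaves enough room to satisfy (i)--(iv) simultaneously by picking $R = N^{\delta_1}$, $|A_N| = N^{\delta_2}$, $t_n = N^{-\delta_3}$ with positive exponents depending on $s$ and $k$.

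\textbf{Main obstacle.} The delicate point is the sharp lower bound on $\Xi_1(\vec{\phi}_n)(t_n)$, where the oscillatory cosine factors from the wave propagator inject time-phases absent in the NLS setting. Isolating the DC contribution from paired frequencies as the dominant term forces a time scale $t_n \sim N^{-c}$ with $0 < c < 1$ rather than the shorter scale one would naively expect from $\FL^1$-smallness, accounting for the ``longer existence time'' phenomenon flagged in the introduction.
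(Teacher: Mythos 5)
Your overall strategy is the same as the paper's (perturb the data by a high-frequency Fourier bump, expand via Lemma~\ref{LEM:local}, show the second Picard iterate dominates through high-to-low transfer with a temporally non-oscillatory component, then balance parameters), but the heart of the argument is left unconstructed, and as stated it has a genuine gap. You only impose conditions on the \emph{spatial} sums of your frequency set $A_N$ (resonant tuples have $\xi_1+\cdots+\xi_k=O(1)$, all others have sum $\ges N$), and then assert that for the resonant tuples the product $\prod_\ell\cos(t'|\xi_\ell|)$ contains a DC part. That latter property is a statement about the \emph{temporal} phases $\sum_j \eps_j|\xi_j|$, which your arrangement does not guarantee: for odd $k$ a symmetric set of frequencies all of a single magnitude scale has spatially resonant tuples (e.g.\ triangle configurations in $d\ge 2$) whose signed sums $\pm N\pm N\pm\cdots\pm N$ never vanish, so no DC component exists, while ``paired'' tuples $(\xi,-\xi,\dots)$ cannot even achieve $\sum_j\xi_j=O(1)$ with an odd number of entries. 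Moreover, for your $O(N^{-2})$ bound on the non-DC terms you also need the nonvanishing temporal phases to be $\ges N$ rather than of some intermediate size, which again is not implied by your hypotheses on $A_N$. The paper resolves exactly this by the specific choice \eqref{data}--\eqref{Sig}: collinear frequency blocks at the two scales $\pm Ne_1,\pm 2Ne_1$, so that every spatially resonant block assignment admits signs with $\sum_j\eps_j|\eta_j|=0$ (take $\eps_j=\sgn(\eta_j\cdot e_1)$), giving the dichotomy \eqref{S1prop}/\eqref{S2prop} used in Proposition~\ref{PROP:INSTAB}, and this works for every integer $k\ge2$, both parities. Without such a construction the key lower bound \eqref{lowerbound} is not established; supplying it is the essential missing step, and this is precisely the ``nonresonant contributions'' subtlety flagged in the introduction (your requirement $t_nN\gg1$ versus the paper's $T\gg (AN)^{-1/2}$ is the same phenomenon).

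Two smaller points. First, your lower bound $R^k|A_N|^{k/2}t_n^2$ presumes a Sidon-type count of resonant tuples; with that count, the comparison of the $j\ge2$ tail (estimated through $\FL^1\embeds H^s$) against the main term requires $t_n^2R^{k-1}|A_N|^{(3k-2)/2}\ll1$, which is strictly stronger than your condition (iii) $t_n^2(R|A_N|)^{k-1}\ll1$; this is harmless only if you keep $|A_N|$ bounded (as the paper does, taking $A=10$), but your parameter discussion does not say so. Second, the mixed trees containing a $\vec u_0$ leaf are not directly covered by \eqref{FLinfty} as stated (both $\H^0$ factors there sit on the same datum); the paper instead runs an $L^2$/Young's inequality bound, cf.\ \eqref{hs3}--\eqref{hs4}, and the resulting comparison forces an extra condition of the form $\|\vec u_0\|_{\H^0}\ll Rf_s(A)$, which should appear alongside your (i)--(iv). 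Both issues are fixable, but they need to be addressed once the frequency set is pinned down.
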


From density and diagonal arguments,
Theorem \ref{THM:NI} follows from Proposition \ref{PROP:NI}.
See \cite{Xia, O17} for the details.

Thus, the remaining part of this paper is devoted to the proof of Proposition \ref{PROP:NI}. 
While the argument closely follows \cite[Section 3]{O17}, we will detail it here 
in order to make this paper self-contained.
In the following, we fix $\vec{u}_0 =(u_0,u_1)$ with $u_0, u_1 \in \mathcal{S}(\M)$.
In Subsection \ref{SUBSEC:31}, we prove multilinear estimates for each term in the power series expansion.
Moreover, by observing high-to-low energy transfer and resonant interaction, we show a crucial lower bound for the first multilinear term.
We then present the proof of Proposition \ref{PROP:NI} in Subsection \ref{SUBSEC:32}.

\subsection{Multilinear estimates}
\label{SUBSEC:31}

In this subsection, we state the multilinear estimates on $\Xi_j$.
Moreover, we show that the first multilinear term $\Xi_1$ is the leading part in the power series expansion in negative Sobolev spaces. 

Let $\chi_K$ denote the indicator function of a subset $K \subset \ft{\M}$, where $\ft{\M}$ is as in \eqref{dual}.
Set $e_1 := (1,0,\dots, 0) \in \ft{\M}$.
Given $n\in \NB$, let $N=N(n)\gg 1$ to be chosen later. We set $\ve{\phi}_{n}=(\phi_{0,n}, 0)$ by
\begin{align}
\ft \phi_{0,n}=R \chi_{\Omega},
\label{data}
\end{align}
where $R=R(N)\geq 1$,
\begin{align}
\Omega= \bigcup_{\eta \in \Sigma} (\eta+Q_{A}), \label{omega}
\end{align}
$Q_{A}=[-\frac A2, \frac A2)^{d}$, $A\gg 1$, and
\begin{align}
\Sigma=\{-2Ne_1, -Ne_1, Ne_1,2Ne_1\}.
\label{Sig}
\end{align}
We require $N, R,$ and $A$ to be chosen so that 
 \begin{align}
\|\ve{u}_0\|_{\FLv^{1}}\ll RA^{d} \qquad \text{and} \qquad A\ll N,
\label{sizedatau}
\end{align}
where the last condition ensures that $\Omega$ in \eqref{omega} is a disjoint union. 
Notice that \eqref{data} and \eqref{omega} imply 
\begin{align}
\|\ve{\phi}_n\|_{\FLv^{1}}
=\|\phi_{0,n}\|_{\FL^{1}} \sim RA^{d} \qquad \text{and}  \qquad \|\ve{\phi}_{n}\|_{\H^{s}}\sim RA^{\frac d2}N^{s},
\label{sizedata}
\end{align}
for any $s\in \R$.
We define $\ve{u}_{0,n}:=\ve{u}_0+\ve{\phi}_n$.
For each $n\in \mathbb{N}$, Lemma~\ref{LEM:local} implies that there exists a unique solution $u_{n}\in C([0,T];\FL^{1}(\M))$ to \eqref{duhamel} with $(u_{n},\dt u_{n})\vert_{t=0}=\ve{u}_{0,n}$ and admitting the power series expansion
\begin{align}
u_{n}=\sum_{j=0}^{\infty} \Xi_{j}(\vec{u}_{0,n})=\sum_{j=0}^{\infty} \Xi_{j}(\vec{u}_{0}+\vec{\phi}_{n})
\label{powerseries}
\end{align}
on $[0,T]$, provided 
\begin{align}
0<T\ll \big( \|\vec{u}_{0}\|_{\FLv^{1}}+RA^{d}\big)^{-\frac{k-1}{2}}. \label{localtime}
\end{align}

We now state some key estimates for the multilinear expressions $\Xi_{j}(\vec{u}_{0,n})$. The proofs are essentially the same as in \cite[Lemma 3.2 and Lemma 3.3]{O17} and are included for completeness.

\begin{lemma}\label{LEM:multlin2}
For any $s<0$ and $j\in \mathbb{N}$, the following estimates hold: 
\begin{align}
\|\vec{u}_{0,n}-\vec{u}_{0}\|_{\H^{s}} &\lesssim RA^{\frac{d}{2}}N^{s}, \label{hs1} \\
\|\Xi_{0}(\vec{u}_{0,n})(t)\|_{H^{s}} &\lesssim 1+RA^{\frac{d}{2}}N^{s},  \label{hs2} \\
\|\Xi_{1}(\vec{u}_{0,n})(t)-\Xi_{1}(\vec{\phi}_{n})(t)\|_{H^s} &\lesssim t^{2}\|\vec{u}_{0}\|_{ \H^{0}}R^{k-1}A^{d(k-1)},  \label{hs3}  \\
\|\Xi_{j}(\vec{u}_{0,n})(t)\|_{H^s} &\lesssim C^j t^{2j}(RA^{d})^{(k-1)j} ( \|\vec{u}_0\|_{\H^0}+Rf_{s}(A)), \label{hs4}
\end{align} 
where 
\begin{equation}
f_{s}(A):=
\begin{cases}
 1 & \textup{if} \,\, \,s<-\frac{d}{2}, \\
 \left( \log A \right)^{\frac{1}{2}}& \textup{if} \,\,\, s=-\frac{d}{2}, \\
 A^{\frac{d}{2}+s} & \textup{if} \,\, \, s>-\frac{d}{2}
\end{cases}
%\label{fsA}
\notag
\end{equation} 
and $0<t\leq 1$.
\end{lemma}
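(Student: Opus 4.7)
The estimates \eqref{hs1} and \eqref{hs2} will be immediate. For \eqref{hs1}, I just note $\vec{u}_{0,n}-\vec{u}_0=\vec{\phi}_n$ and repeat the direct Fourier-side computation behind \eqref{sizedata}, now with exponent $s$ in place of $1$. For \eqref{hs2}, I apply the linear bound $\|S(t)\vec{v}\|_{H^s}\les \|\vec{v}\|_{\H^s}$ (as in \eqref{linestHs}) to $\Xi_0(\vec{u}_{0,n})=S(t)\vec{u}_{0,n}$, and combine with the triangle inequality, \eqref{hs1}, and the Schwartz regularity of $\vec{u}_0$.

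For \eqref{hs3} and the mixed part of \eqref{hs4}, the strategy is identical. I use the multilinearity of the Duhamel operator $\I$ to expand each $\Psi_{\vec{u}_{0,n}}(\mathcal{T})=\Psi_{\vec{u}_0+\vec{\phi}_n}(\mathcal{T})$ as a sum over assignments of $\{S(t)\vec{u}_0,\,S(t)\vec{\phi}_n\}$ to the $m:=(k-1)j+1$ leaves of $\mathcal{T}$. For an assignment with at least one leaf equal to $S(t)\vec{u}_0$, I bound the corresponding iterated-Duhamel term in $L^2$ (and hence in $H^s$, since $\|\cdot\|_{H^s}\les \|\cdot\|_{L^2}$ for $s<0$) by the repeated Young-type convolution inequality
\[ \|f_1\cdots f_k\|_{L^2}\les \|f_{j_0}\|_{L^2}\prod_{j\neq j_0}\|f_j\|_{\FL^1}, \]
together with the pointwise estimate $\bigl|\sin((t-t')|\nabla|)/|\nabla|\bigr|\les t-t'$. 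Placing one $S(t)\vec{u}_0$-factor in $L^2$ (with norm $\les \|\vec{u}_0\|_{\H^0}$) and all other factors in $\FL^1$ (using $\|S(t)\vec{u}_0\|_{\FL^1}\les 1$ and $\|S(t)\vec{\phi}_n\|_{\FL^1}\les RA^d$ from \eqref{sizedata}) gives the worst-case bound $t^{2j}\|\vec{u}_0\|_{\H^0}(RA^d)^{(k-1)j}$. Summing over $|\mathbf{T}(j)|\les C_0^j$ trees and $\les 2^m$ leaf assignments absorbs all combinatorics into $C^j$, yielding \eqref{hs3} (the case $j=1$) and the $\|\vec{u}_0\|_{\H^0}$-contribution in \eqref{hs4}.

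The main obstacle is the pure $\vec{\phi}_n$-assignment $\Xi_j(\vec{\phi}_n)$, where the goal is to gain $f_s(A)$ rather than the much weaker $A^{d/2}$ one would obtain from Bernstein's inequality. Here the plan is to exploit the sparse Fourier support of $\phi_{0,n}$: from \eqref{data}--\eqref{Sig}, the support of $\widehat{\Xi_j(\vec{\phi}_n)}(t,\cdot)$ lies in the Minkowski sum
\[ \bigcup_{\vec{\eta}\in\Sigma^m}\bigl(\eta_1+\cdots+\eta_m+Q_{mA}\bigr), \]
which splits into the \emph{resonant block} $\{\sum\eta_i=0\}$ sitting in $Q_{mA}$ around the origin and at most $|\Sigma|^m\leq C^j$ \emph{non-resonant blocks} located at distance $\ges N$ from the origin. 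Combining the pointwise bound
\[ \|\Xi_j(\vec{\phi}_n)\|_{H^s}^2\leq \|\widehat{\Xi_j(\vec{\phi}_n)}\|_{L^\infty}^2\int_{\mathrm{supp}}\jb{\xi}^{2s}\,d\xi \]
with \eqref{FLinfty} applied to $\vec{\psi}=\vec{\phi}_n$---which yields $\|\widehat{\Xi_j(\vec{\phi}_n)}\|_{L^\infty}\les C^j t^{2j}R^{(k-1)j+1}A^{d(k-1)j}$ via \eqref{sizedata}---the resonant block contributes $\int_{Q_{mA}}\jb{\xi}^{2s}d\xi\les f_s(mA)^2$, while the non-resonant blocks contribute $\les C^j(mA)^d N^{2s}$. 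Both are dominated by $C^j f_s(A)^2$ thanks to the gap $A\ll N$ in \eqref{sizedatau} and the fact that the polynomial-in-$j$ correction $f_s(mA)/f_s(A)=O(m^{d/2+s})$ is absorbed into $C^j$. This produces the desired $Rf_s(A)$-term and completes the plan.
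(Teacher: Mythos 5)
Your proposal is correct and follows essentially the same route as the paper: \eqref{hs1}--\eqref{hs2} from \eqref{sizedata} and \eqref{linestHs}, the mixed terms by multilinear expansion with one factor in $L^2$ and the rest in $\FL^1$ (giving \eqref{hs3} and the $\|\vec{u}_0\|_{\H^0}$ part of \eqref{hs4}), and the pure $\Xi_j(\vec{\phi}_n)$ term by H\"older against the $\FL^\infty$ bound \eqref{FLinfty} over a Fourier support consisting of $C^j$ cubes of side $\sim A$. The only deviation is your resonant/non-resonant splitting of those cubes, which is unnecessary (and your claim that non-resonant blocks sit at distance $\ges N$ tacitly needs $((k-1)j+1)A\ll N$, not uniform in $j$): since $s<0$ and $\jb{\xi}^s$ is decreasing in $|\xi|$, every block contributes at most $f_s(A)$, which is how the paper concludes directly.
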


\begin{proof}
The proofs of \eqref{hs1} and \eqref{hs2} follow immediately from $\vec{\phi}_{n}=\vec{u}_{0,n}-\vec{u}_0$,  \eqref{sizedata}, and \eqref{linestHs}. 
By the multilinearity of $\I$, we have 
\begin{align}
\Xi_{1}(\vec{u}_{0,n})(t)-\Xi_{1}(\vec{\phi}_{n})(t)= \sum_{\vec{\psi}_1, \ldots, \vec{\psi}_{k}} \I[ S(t)\vec{\psi}_1,\ldots, S(t)\vec{\psi}_{k}], \label{diff1}
\end{align}
where the sum is over all choices of $\vec{\psi}_{j}\in \{ \vec{u}_0, \vec{\phi}_{n}\}$ with at least one appearance of $\vec{u}_{0}$.
Since $s<0$, \eqref{diff1}, Young's inequality, \eqref{linestHs}, and \eqref{linest} imply
\begin{align*}
\|\Xi_{1}(\vec{u}_{0,n})(t)-\Xi_{1}(\vec{\phi}_{n})(t)\|_{H^s} & \leq \|\Xi_{1}(\vec{u}_{0,n})(t)-\Xi_{1}(\vec{\phi}_{n})(t)\|_{L^2} \\
& \leq \sum_{\vec{\psi}_1, \ldots, \vec{\psi}_{k}} \| \I[ S(t)\vec{\psi}_1,\ldots, S(t)\vec{\psi}_{k}] \|_{L^2} \\
& \les t^{2}\|\vec{u}_{0}\|_{\H^{0}} (\|\vec{u}_0\|_{\FLv^1}^{k-1}+\|\vec{\phi}_{n}\|_{\FLv^1}^{k-1}).
\end{align*}
Using \eqref{sizedatau} and \eqref{sizedata}, we obtain \eqref{hs3}.

We now prove \eqref{hs4}. By the triangle inequality, we have 
\begin{align}
\|\Xi_{j}(\vec{u}_{0,n})(t)\|_{H^s} \leq \|\Xi_{j}(\vec{u}_{0,n})(t)-\Xi_{j}(\vec{\phi}_{n})(t)\|_{H^s}+\|\Xi_{j}(\vec{\phi}_{n})(t)\|_{H^{s}},
\label{xiju}
\end{align}
and thus we reduce to proving estimates for the two terms on the right hand side of the above.

From \eqref{data} and \eqref{omega}, $\supp \mathcal{F} [ S(t)\vec{\phi}_{n}]$ is contained within at most four disjoint cubes of volume approximately $A^{d}$. 
Thus, for each fixed $\mathcal{T}\in {\bf{T}}(j)$, the support of $\mathcal{F} [ \Psi_{\vec{\phi}_{n}}(\mathcal{T})]$ is contained in at most $4^{(k-1)j+1}$ cubes of volume approximately $A^d$.
Hence, for some $c,C>0$, we have 
\begin{align*}
| \supp\mathcal{F} [ \Xi_{j}(\ve{\phi}_{n})(t)] |\leq C^{j}A^{d}\leq |C^{\frac{j}{d}}Q_{A}|.
\end{align*}
As $s<0$, $\jb{\xi}^{s}$ is decreasing in $|\xi|$ and using \eqref{FLinfty}, \eqref{linestHs}, \eqref{linest}, and \eqref{sizedata}, we have 
\begin{align}
\begin{split}
\| \Xi_{j}(\vec{\phi}_n)(t)\|_{H^{s}} & \leq \|\jb{\xi}^{s}\|_{L^2(\supp\mathcal{F} [ \Xi_{j}(\ve{\phi}_{n})(t)])} \|  \Xi_{j}(\vec{\phi}_n)(t)\|_{\FL^{\infty}} \\
& \les C^{j}\|\jb{\xi}^{s}\|_{L^2(C^{\frac{j}{d}}Q_A)} t^{2j}(RA^{d})^{(k-1)j-1}(RA^{\frac{d}{2}})^{2} \\
& \les C^{j}t^{2j}(RA^{d})^{(k-1)j}Rf_{s}(A).
\end{split}
\label{xijphi}
\end{align}
Meanwhile, by considerations similar to \eqref{diff1}, we have 
\begin{align}
\begin{split}
\|\Xi_{j}(\vec{u}_{0,n})(t)-\Xi_{j}(\vec{\phi}_{n})(t)\|_{H^s} & \leq \|\Xi_{j}(\vec{u}_{0,n})(t)-\Xi_{j}(\vec{\phi}_{n})(t)\|_{L^2} \\
& \leq C^{j}t^{2j}\|\vec{u}_{0}\|_{\H^0}(\|\vec{u}_0 \|_{\FLv^{1}}^{(k-1)j}+\|\vec{\phi}_{n}\|_{\FLv^{1}}^{(k-1)j})\\
&\leq C^{j}t^{2j}\|\vec{u}_0\|_{\H^0}(RA^{d})^{(k-1)j}.
\end{split}
\label{xijphi2}
\end{align}
Thus, \eqref{hs4} follows from \eqref{xiju}, \eqref{xijphi}, and \eqref{xijphi2}.
\end{proof}

We now recall the following bounds on convolutions of characteristic functions of cubes:
\begin{lemma} For any $a,b,\xi\in \ft{\M}$ and $A\geq 1$, we have
\begin{align}
c_{d}A^{d}\chi_{a+b+Q_A}(\xi) \leq\chi_{a+Q_A}\ast \chi_{b+Q_A}(\xi)\leq  C_{d}A^{d}\chi_{a+b+Q_{2A}}(\xi).
\label{conv}
\end{align}
\end{lemma}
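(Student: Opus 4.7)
The plan is to unfold the convolution definition and estimate the product of indicators of cubes coordinate-wise. One writes
\begin{equation*}
\chi_{a+Q_A} \ast \chi_{b+Q_A}(\xi) = \int_{\ft \M} \chi_{a+Q_A}(\eta) \, \chi_{b+Q_A}(\xi - \eta) \, d\eta,
\end{equation*}
interpreted as a sum against counting measure when $\ft{\M}=\Z^d$.

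For the upper bound I would simply bound the integrand by $\chi_{a+Q_A}(\eta)$ and integrate, obtaining at most $|Q_A|\les A^d$. To locate the support, note that if both $\eta-a\in Q_A$ and $\xi-\eta-b\in Q_A$, then
\begin{equation*}
\xi-a-b=(\eta-a)+(\xi-\eta-b)\in Q_A+Q_A\subset Q_{2A},
\end{equation*}
using the componentwise inclusion $[-\tfrac A2,\tfrac A2)+[-\tfrac A2,\tfrac A2)\subset[-A,A)$. This yields the upper inequality with some $C_d>0$.

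For the lower bound I would change variables via $\omega:=\eta-a$ and set $\zeta:=\xi-a-b$, so that the convolution equals the measure (or count) of $Q_A\cap(\zeta-Q_A)$. When $\xi\in a+b+Q_A$, i.e.~$\zeta\in Q_A$, a coordinate-wise decomposition shows that the $i$th factor of this intersection contains an interval of length at least $A-|\zeta_i|\geq A/2$, since $|\zeta_i|\leq A/2$. Hence the intersection has measure at least $(A/2)^d$, yielding $c_d=2^{-d}$. In the discrete case $\ft{\M}=\Z^d$, the same coordinate-wise decomposition applies and the hypothesis $A\geq 1$ ensures that each factor of side at least $A/2$ contributes $\gtrsim A$ lattice points, after a minor relaxation of $c_d$.

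Honestly, there is no real obstacle here: this is a standard fact about convolutions of indicator functions of cubes. The only care needed is to keep track of the half-open convention for $Q_A$ so that the sumset inclusion $Q_A+Q_A\subset Q_{2A}$ and the lower interval-length bound $A-|\zeta_i|\geq A/2$ hold uniformly for all $\zeta\in Q_A$, and to confirm in the discrete case that $A\geq 1$ suffices to extract a lattice-point count of order $A^d$ from a box of side $A/2$.
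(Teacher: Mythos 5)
Your argument is correct, and there is nothing in the paper to compare it against: the lemma is stated as a recalled standard fact with no proof given, and your direct computation (reduce by translation to measuring $Q_A \cap (\zeta - Q_A)$ with $\zeta = \xi - a - b$, then work coordinate-wise) is exactly the canonical way to prove it. One small clarification for the discrete case: an interval of length $\geq A/2$ does \emph{not} by length alone contain $\gtrsim A$ lattice points when $A$ is close to $1$; what saves you is that for $\zeta_i \in [-\tfrac A2, \tfrac A2)$ the intersection interval in the $i$th coordinate always contains the integer $0$, so its lattice count is at least $\max(1, \tfrac A2 - 1) \gtrsim A$ for all $A \geq 1$ (and in the paper's application $A \gg 1$ or $A=10$ anyway, so the constant is harmless).
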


In the following proposition, we identify that the first multilinear term in the Picard expansion is culpable for the instability in Proposition~\ref{PROP:NI}.

\begin{proposition}\label{PROP:INSTAB}
Let $k\geq 2$ be an integer and $s<0$.
Let $\vec{\phi}_{n}$ be as in \eqref{data}.
Then, for $(AN)^{-\frac 12} \ll T\ll A^{-1}$, we have 
\begin{align}
\| \Xi_{1}(\ve{\phi}_n)(T)\|_{H^{s}}
\ges T^{2}R^k A^{d(k-1)} A^{\frac{d}{2}+s}.
\label{lowerbound}
\end{align}
\end{proposition}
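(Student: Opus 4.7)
The plan is to work on the Fourier side, isolate the contributions to $\ft{\Xi_1(\vec\phi_n)(T)}(\xi)$ at low output frequencies $\xi\in Q_{c_0A}$ (for a small $c_0=c_0(k,d)>0$) coming from \emph{resonant} frequency configurations, and show these dominate. Since $\vec\phi_n=(\phi_{0,n},0)$ and $\ft{\phi_{0,n}}=R\chi_\Omega$,
\[
\ft{\Xi_1(\vec\phi_n)(T)}(\xi) = R^k\sum_{\vec\eta\in\Sigma^k}\int_0^T\frac{\sin((T-t')|\xi|)}{|\xi|}\int_{\sum_j\xi_j=\xi}\prod_{j=1}^k\chi_{\eta_j+Q_A}(\xi_j)\cos(t'|\xi_j|)\,d\xi_1\cdots d\xi_{k-1}\,dt',
\]
and I would expand $\prod_j\cos(t'|\xi_j|)=2^{-k}\sum_{\varepsilon\in\{\pm1\}^k}e^{it'\sum_j\varepsilon_j|\xi_j|}$. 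Since $A\ll N$ and the four points of $\Sigma$ are pairwise separated by at least $N$, only tuples $\vec\eta=(\sigma_1e_1,\ldots,\sigma_ke_1)$ with $\sigma_j\in\{\pm N,\pm2N\}$ and $\sum_j\sigma_j=0$ contribute on $Q_{c_0A}$; such tuples always exist for $k\ge2$ (e.g.\ $(Ne_1,-Ne_1)$ for $k=2$, $(Ne_1,Ne_1,-2Ne_1)$ for $k=3$, and so on). Writing $\xi_j=\eta_j+\tilde\xi_j$ with $\tilde\xi_j\in Q_A$, the distinguished sign $\varepsilon^*_j:=\sgn(\sigma_j)$ (together with $-\varepsilon^*$) gives
\[
\sum_j\varepsilon^*_j|\eta_j+\tilde\xi_j|=\sum_j\sigma_j+\sum_j\tilde\xi_{j,1}+O(kA^2/N)=O(A),
\]
whereas any other $\varepsilon$ yields $|\sum_j\varepsilon_j|\eta_j||\in N\mathbb{Z}\setminus\{0\}$, so $|\sum_j\varepsilon_j|\xi_j||\gtrsim N$.

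For the resonant pairs ($\varepsilon=\pm\varepsilon^*$) and $\xi\in Q_{c_0A}$, all of $(T-t')|\xi|$ and $t'\sum_j\varepsilon^*_j|\xi_j|$ are $\lesssim TA\ll 1$; Taylor expansion gives $\sin((T-t')|\xi|)/|\xi|=(T-t')(1+O((TA)^2))$ and $e^{it'\sum_j\varepsilon^*_j|\xi_j|}=1+O(TA)$, so the inner $t'$ integral contributes $\tfrac12 T^2(1+O(TA))$. The spatial convolution (using $\sum_j\eta_j=0$) reduces to $(\chi_{Q_A})^{*k}(\xi)$, which by iterating \eqref{conv} satisfies $(\chi_{Q_A})^{*k}(\xi)\gtrsim c^{k-1}A^{d(k-1)}$ on $Q_{c_0A}$. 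All resonant contributions are real and positive and thus add constructively:
\[
\ft{\Xi_1^{\mathrm{res}}(\vec\phi_n)(T)}(\xi)\gtrsim R^kT^2A^{d(k-1)}\quad\text{on }Q_{c_0A}.
\]

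For any non-resonant $(\vec\eta,\varepsilon)$ and fixed $\tilde\xi$, the phase $f(\tilde\xi):=\sum_j\varepsilon_j|\eta_j+\tilde\xi_j|$ satisfies $|f|\gtrsim N$. One integration by parts in $t'$, using $|\sin((T-t')|\xi|)/|\xi||\le T$ and $|\partial_{t'}(\sin((T-t')|\xi|)/|\xi|)|\le 1$, produces boundary and remainder terms bounded by $T/|f|\lesssim T/N$, \emph{uniformly} in $\tilde\xi$. Multiplying by the spatial factor (which is $\lesssim A^{d(k-1)}$) and summing over finitely many configurations gives $|\ft{\Xi_1^{\mathrm{nonres}}(\vec\phi_n)(T)}(\xi)|\lesssim R^k TA^{d(k-1)}/N$. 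Under the hypotheses $T\gg(AN)^{-1/2}$ and $A\ll N$ we have $T^2\gg T/N$, so the resonant part dominates, giving $|\ft{\Xi_1(\vec\phi_n)(T)}(\xi)|\gtrsim R^k T^2A^{d(k-1)}$ on $Q_{c_0A}$. Therefore
\[
\|\Xi_1(\vec\phi_n)(T)\|_{H^s}^2 \gtrsim R^{2k}T^4A^{2d(k-1)}\int_{Q_{c_0A}}\jb\xi^{2s}d\xi\gtrsim R^{2k}T^4A^{2d(k-1)}f_s(A)^2,
\]
the last inequality being the routine computation of $\int_{Q_{c_0A}}\jb\xi^{2s}d\xi$ in the three regimes $s<-d/2$, $s=-d/2$, and $-d/2<s<0$, which exactly reproduces $f_s(A)^2$.

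The main obstacle is the non-resonant estimate: the phase $f(\tilde\xi)$ genuinely depends on the spatial variables of the convolution, so one cannot pull $e^{it'f}$ outside the $\tilde\xi$-integral. The $t'$-integration by parts must therefore be performed \emph{inside} the convolution and must yield a bound uniform in $\tilde\xi$. This uniform gain of $T/N$ is exactly what forces the existence time to be taken somewhat longer than in the NLS setting, namely $T\gg(AN)^{-1/2}$ rather than merely $T\ll A^{-1}$.
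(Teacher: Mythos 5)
Your strategy is essentially the paper's: expand the product of cosines into time-exponentials, split according to whether the phase $\sum_j \eps_j|\eta_j|$ vanishes, obtain the lower bound $\ges T^2R^kA^{d(k-1)}$ from the resonant terms via positivity of the cosine factor and the convolution bound \eqref{conv}, and beat the non-resonant terms by exploiting the $t'$-oscillation at frequency $\sim N$. Your two variations are harmless and even slightly tidier: you measure the output on the centered cube $Q_{c_0A}$ and gain the factor $N^{-1}$ by an integration by parts in $t'$ (bounding $|\sin((T-t')|\xi|)|/|\xi|\le T$ and its $t'$-derivative by $1$), whereas the paper restricts the output to $\frac{A}{4}e_1+Q_{A/4}$, integrates in time exactly, and gains $A^{-1}N^{-1}$ from the prefactor $\frac{1}{2|\xi|}$ together with the denominators $|\xi|\pm\sum_j\eps_j|\xi_j|\sim N$; your centered cube also makes the computation $\int_{Q_{c_0A}}\jb{\xi}^{2s}\,d\xi\sim f_s(A)^2$ valid verbatim in all three regimes, and your comparison $T^2\gg T/N$ only requires $TN\gg1$, which indeed follows from $T\gg (AN)^{-1/2}$ and $A\ll N$.

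There is, however, one concrete inaccuracy. You define the resonant set as $\{\pm\eps^*\}$ with $\eps^*_j=\sgn(\sigma_j)$ and claim that every other sign choice gives $|\sum_j\eps_j|\eta_j||\in N\Z\setminus\{0\}$. This is false for $k\ge 4$: for instance, with $\vec\eta=(Ne_1,Ne_1,-Ne_1,-Ne_1)$ the choice $\eps=(+,-,+,-)\neq\pm\eps^*$ also satisfies $\sum_j\eps_j|\eta_j|=0$, so on its support the phase obeys $|f(\tilde\xi)|\les A$ rather than $\ges N$, and your integration-by-parts bound $T/|f|$ does not yield the claimed $T/N$ for such terms. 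The repair is immediate and is exactly the paper's definition: call $\eps$ resonant if and only if $\sum_j\eps_j|\eta_j|=0$ (the set $S_1(\eta)$ in the paper), and note that $\sum_j\eps_j|\eta_j|\in N\Z$, so every genuinely non-resonant term has $|\sum_j\eps_j|\eta_j||\ge N$ and hence $|f|\ges N$ as you need. Every resonant $\eps$, not only $\pm\eps^*$, has phase $O(A)$, hence $\cos(t'\sum_j\eps_j|\xi_j|)\ge\frac12$ on $[0,T]$ with $T\ll A^{-1}$, and contributes with the same positive sign, so these additional terms only strengthen your lower bound. With this reclassification your argument is complete and coincides in essence with the paper's proof.
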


\begin{proof}

\noi
To simplify notation, we write 
\begin{align}
\Gamma:= \bigg\{ (\xi_1,\ldots, \xi_k) \in \ft{\M}^k \, : \, \sum_{j=1}^{k}\xi_{j}=\xi \bigg\} \qquad \text{and} \qquad d\xi_{\G}:=d\xi_{1}\cdots d\xi_{k-1}. 
%\label{planes}
\notag
\end{align}
Restricting $|\xi|\les A$ and using $A\ll N$, \eqref{data}, and product-to-sum formulas, we have
\begin{align*}
\mathcal{F}[ \Xi_{1}(\ve{\phi}_n)(T) ](\xi)
&= \int_{0}^{T} \frac{\sin((T-t)|\xi|)}{|\xi|} \mathcal{F} \big[ (S(t)(\phi_{0,n},0))^{k}\big] dt\\
&= \int_{0}^{T} \frac{\sin((T-t)|\xi|)}{|\xi|}\int_{\G} \prod_{m=1}^{k} \cos(t|\xi_m|)\ft \phi_{0,n}(\xi_m) d\xi_{\G}dt \\
& = R^k 
\int_{0}^{T} \frac{ \sin((T-t)|\xi|)}{|\xi|} \int_{\G} \prod_{m=1}^{k} \cos(t|\xi_{m}|)\chi_{\O}(\xi_{m}) dt d\xi_{\G} \\
& = \frac{R^k}{2^k}
\sum_{\substack{(\eta_1, \dots, \eta_k) \in \Si^k \\ \eta_1+ \dots + \eta_k =0}}
\sum_{(\eps_1, \dots, \eps_k) \in \{ -1,1\}^k}
\int_{0}^{T} \frac{ \sin((T-t)|\xi|)}{|\xi|} \\
&\quad \times
\int_{\G}\cos \bigg( t \sum_{j=1}^k \eps_j |\xi_j| \bigg) \prod_{j=1}^{k} \chi_{\eta_j + Q_A}(\xi_j) d\xi_{\G} dt.
\end{align*}
For each fixed $\eta:=(\eta_1,\ldots,\eta_k)\in \Sigma^{k}$ with $\eta_1+\dots+\eta_k=0$, we split the inner summation into two parts:
\begin{align*}
\sum_{(\eps_1, \dots, \eps_k) \in \{ -1,1\}^k} = \sum_{(\eps_1, \dots, \eps_k) \in S_1 (\eta)} +\sum_{(\eps_1, \dots, \eps_k) \in S_2(\eta) } 
\end{align*}
where 
\begin{align*}
S_{1}(\eta):= \bigg\{ (\eps_1, \dots, \eps_k) \in \{ -1,1\}^k \, : \, \sum_{j=1}^{k} \eps_{j}|\eta_{j}|=0 \bigg\},
\end{align*}
 $S_{2}(\eta) := \{-1,1\}^{k} \setminus S_1(\eta)$ and we write 
 \begin{align}
\mathcal{F}& [\Xi_{1}(\ve{\phi}_n)(T) ](\xi)
=\frac{R^{k}}{2^k}\sum_{\substack{\eta=(\eta_1, \dots, \eta_k) \in \Si^k \\ \eta_1+ \dots + \eta_k =0}} \Big( I_{1}(\eta,\xi,T)+I_{2}(\eta,\xi,T) \Big). 
\label{decomp}
\end{align}
Note that the set $S_{1}(\eta)$ is non-empty.
For fixed $\xi_{j}\in \eta_{j}+Q_{A}$ and then $(\eps_1, \dots, \eps_k) \in S_1 (\eta)$,
\begin{align}
\bigg\vert \sum_{j=1}^{k} \eps_{j}|\xi_{j}| \bigg\vert
=
\bigg\vert \sum_{j=1}^{k} \eps_{j}\big(|\xi_{j}|-|\eta_j|\big) \bigg\vert
\le
\sum_{j=1}^{k} |\xi_{j} - \eta_j| 
 \les A.
\label{S1prop}
\end{align}
Then, it follows from \eqref{S1prop} that 
\begin{align}
\cos \bigg( t \sum_{j=1}^k \eps_j |\xi_j| \bigg)
\geq \frac{1}{2} \label{cosbd2}
\end{align}
for $0<t<T\ll A^{-1}$.
Moreover, we have
\begin{align}\label{sinbd}
\frac{\sin ((T-t)|\xi|)}{|\xi|} \ges T-t
\end{align}
for $0<t<T\ll A^{-1}$ and $|\xi| \les A$.
Using \eqref{decomp}, \eqref{conv}, \eqref{cosbd2}, and \eqref{sinbd},
we obtain
\begin{align*}
I_{1}(\eta,\xi,T) &\ges \sum_{(\eps_1, \dots, \eps_k) \in S_1 (\eta)}\int_{0}^{T} (T-t)dt 
\int_{\G} \prod_{j=1}^{k} \chi_{\eta_j +Q_{A}}(\xi_j)d\xi_{\G} \\
& \ges T^{2}A^{d(k-1)}\chi_{Q_{A}}(\xi)
\end{align*}
and hence
\begin{align}
\frac{R^{k}}{2^{k}}\sum_{\substack{\eta=(\eta_1, \dots, \eta_k) \in \Si^k \\ \eta_1+ \dots + \eta_k =0}}
I_{1}(\eta,\xi,T) 
\ges T^{2}R^{k}A^{d(k-1)}\chi_{Q_{A}}(\xi)
 \label{I1con}
\end{align}
for $1\leq A\ll N$ and $0<T\ll A^{-1}$.

We now turn to the contribution from $I_{2}(\eta,\xi,T)$. We observe that 
for each fixed $\eta=(\eta_1,\ldots,\eta_k)$, $(\eps_1, \dots, \eps_k) \in S_{2}(\eta)$, and $\xi_j \in \eta_{j}+Q_{A}$, 
\begin{align}
\bigg\vert \sum_{j=1}^{k} \eps_{j}|\xi_j| \bigg\vert \sim N. \label{S2prop}
\end{align}
In view of $A\ll N$, the upper bound is obvious. For the lower bound,
the reverse triangle inequality yields $||\xi_j|-|\eta_j|| \le |\xi_j-\eta_j| \les A$ for $\xi_j \in \eta_{j}+Q_{A}$ and hence, we have
\begin{align*}
\sum_{j=1}^{k} \eps_{j}|\xi_j|
= \sum_{j=1}^{k} \eps_j |\eta_j|  
+ \sum_{j=1}^{k} \eps_{j} ( |\xi_j| - |\eta_j|)
=\sum_{j=1}^{k} \eps_j |\eta_j| 
+\mathcal{O}(A).
\end{align*}
It follows from \eqref{Sig} that
\begin{align*}
\bigg\vert \sum_{j=1}^{k} \eps_j |\eta_j| \bigg\vert \geq N
\end{align*}
for $(\eps_1, \dots, \eps_k) \in S_2(\eta)$,
which verifies \eqref{S2prop}. 
We therefore have 
\begin{align*}
I_{2}(\eta,\xi,T) =
\sum_{(\eps_1, \dots, \eps_k) \in S_2(\eta)}
&\frac{1}{2|\xi|}\int_{\G}\int_{0}^{T} 
\bigg[ \sin \bigg(  T|\xi|-t\bigg( |\xi|-\sum_{j=1}^{k} \eps_j |\xi_j|\bigg) \bigg)  \\
&+\sin\bigg( T|\xi|-t\bigg( |\xi|+\sum_{j=1}^{k} \eps_{j}|\xi_j| \bigg)\bigg) \bigg] dt
 \prod_{j=1}^{k}\chi_{\eta_j+Q_A}(\xi_j)d\xi_{\G}.
\end{align*}
If we further restrict $\xi\in \frac{A}{4}e_1 +Q_{\frac{A}{4}}$ and use \eqref{S2prop},
 \eqref{conv}, and $A \ll N$, we obtain
\begin{align*}
|I_{2}(\eta,\xi,T)| \les 2^k A^{-1}N^{-1}A^{d(k-1)}\chi_{Q_{kA}}(\xi),
\end{align*}
which implies 
\begin{align}
\bigg\vert \frac{R^{k}}{2^{k}}\sum_{\substack{\eta=(\eta_1, \dots, \eta_k) \in \Si^k \\ \eta_1+ \dots + \eta_k =0}}
 I_{2}(\eta,\xi,T)\bigg\vert \les A^{-1}N^{-1}R^{k}A^{d(k-1)}\chi_{Q_{kA}}(\xi). \label{I2con}
\end{align}

Returning to \eqref{decomp} and using \eqref{I1con}, \eqref{I2con} and imposing $T^{2}AN\gg \max(1,C)$, we obtain
\begin{align}
\| \Xi_{1}(\ve{\phi}_n)(T)\|_{H^{s}}
&\ge \| \jb{\xi}^s \Ft [\Xi_{1}(\ve{\phi}_n)](T)\|_{L^2_\xi (\frac{A}{4}e_1 +Q_{\frac{A}{4}})} \notag\\
&\ges \Big( T^2 - CA^{-1} N^{-1} \Big) R^k A^{d(k-1)} \| \jb{\xi}^{s}\|_{L^{2}_{\xi}( \frac{A}{4}e_1 +Q_{\frac{A}{4}})} \notag\\
&\ges T^2 R^k A^{d(k-1)} A^{\frac{d}{2}+s}, \notag
\end{align}
which shows \eqref{lowerbound}.
\end{proof}

\begin{remark}\rm \label{REM:NLKGlb}
The same result as in Proposition \ref{PROP:INSTAB} is valid for \eqref{NLKG}.
Indeed, since the linear solution of \eqref{NLKG} is written as
\[
S(t)(u_0,u_1)= \cos(t\jb{\nb}) u_0 + \frac{\sin (t \jb{\nb})}{\jb{\nb}} u_1,
\]
it suffices to replace $|\xi|$ in the proof with $\jb{\xi}$.
More precisely, it follows from $\jb{\xi_j}-|\xi_j| = \frac1{\jb{\xi_j}+|\xi_j|}$ and \eqref{S1prop} that
\begin{align}
\bigg\vert \sum_{j=1}^{k} \eps_{j} \jb{\xi_{j}} \bigg\vert
&\le \bigg\vert \sum_{j=1}^{k} \eps_{j} |\xi_{j}| \bigg\vert + \sum_{j=1}^{k} (\jb{\xi_j}-|\xi_j|) \notag\\
&\les A,
\notag
\end{align}
for $\eta=(\eta_1,\ldots,\eta_k) \in \Si^k$, $(\eps_1, \dots, \eps_k) \in S_1 (\eta)$, and $\xi_{j}\in \eta_{j}+Q_{A}$.
Similarly, from \eqref{S2prop}, we have
\begin{align}
\bigg\vert \sum_{j=1}^{k} \eps_{j}|\xi_j| \bigg\vert
&\ge \bigg\vert \sum_{j=1}^{k} \eps_{j} |\xi_{j}| \bigg\vert - \sum_{j=1}^{k} (\jb{\xi_j}-|\xi_j|) \notag\\
&\sim N,
\notag
\end{align}
for $\eta=(\eta_1,\ldots,\eta_k) \in \Si^k$, $(\eps_1, \dots, \eps_k) \in S_{2}(\eta)$, and $\xi_j \in \eta_{j}+Q_{A}$.
Hence, from the same argument as in the proof of Proposition \ref{PROP:INSTAB},
the first multilinear term in the Picard expansion for \eqref{NLKG} satisfies \eqref{lowerbound}.
\end{remark}

\subsection{Proof of Proposition \ref{PROP:NI}}
\label{SUBSEC:32}

In order to prove Proposition~\ref{PROP:NI}, it suffices to show, given $n\in \mathbb{N}$, the following hold:
\begin{align*}
 \textup{(i)} & \quad RA^{\frac d2}N^s \ll \frac 1n, \\ 
 \textup{(ii)} & \quad T^2 R^{k-1}A^{d(k-1)} \ll 1,  \\ 
 \textup{(iii)} & \quad T^{2}R^{k}A^{d(k-1)}A^{\frac{d}{2}+s} \gg n,\\ 
 \textup{(iv)} & \quad T^2 R^k A^{d(k-1)}A^{\frac{d}{2}+s} \gg T^{4}R^{2(k-1)+1}A^{2d(k-1)}f_{s}(A), \\
 \textup{(v)} & \quad (AN)^{-\frac{1}{2}}\ll T\ll \min \Big( A^{-1}, \frac 1n \Big),\\ 
 \textup{(vi)} & \quad \| \ve{u}_0 \|_{\H^0}\ll RA^{\frac{d}{2}+s}, \,\, A\ll N, \,\, \| \ve{u}_0\|_{\FLv^{1}}\ll RA^d
 \end{align*}
for some particular choices of $R,T$, and $N$ all depending on $n$. While we could also allow $A$ to depend on $N$ as well, it turns out that we can choose $A$ independently of $N$ (and hence $n$) as we see below.
Notice that condition (iv) reduces to satisfying 
\begin{align*}
\textup{(iv')} & \quad T^{2}R^{k-1}A^{d(k-1)}f_{s}(A) \ll A^{\frac{d}{2}+s}.
\end{align*}
 The conditions (ii) and the last of (vi) ensure that the power series expansion \eqref{powerseries} is valid on $[0,T]$, where $T$ must satisfy \eqref{localtime}.

We now indicate how establishing (i) through (vi) suffices to prove Proposition~\ref{PROP:NI}. 
When (ii) and (vi) hold, it follows from \eqref{data}, \eqref{omega}, \eqref{sizedatau}, \eqref{sizedata}, and Lemma~\ref{LEM:local}, that for each $n\in \mathbb{N}$, there is a unique solution $u_{n} \in C([0,T];\FL^{1}(\M))$ to \eqref{duhamel} with $(u_n , \dt u_n)|_{t=0}=\vec{u}_{0,n}$ and such that the power series expansion \eqref{powerseries} converges on $[0,T]$.
By \eqref{hs1}, condition (i) ensures the first expression in \eqref{nexplosion}. From \eqref{hs4}, (ii), and (vi), we have
\begin{align}
\begin{split}
\Bigg\| \sum_{j=2}^{\infty} \Xi_{j}(\vec{u}_{0,n})(T) \Bigg\|_{H^{s}} & \les Rf_{s}(A)\sum_{j=2}^{\infty} (CT^{2}R^{k-1}A^{d(k-1)})^{j} \\
& \les T^{4}R^{2(k-1)+1}A^{2d(k-1)}f_{s}(A).
\end{split}
\label{tail}
\end{align}
Then, from Proposition~\ref{PROP:INSTAB} (thus requiring (v)), \eqref{hs2}, \eqref{hs3}, \eqref{tail}, (iii), (iv), and (vi), we have 
\begin{align*}
\|u_{n}(T)\|_{H^s}& \geq \|\Xi_{1}(\vec{\phi}_{n})(T)\|_{H^s}-\|\Xi_{0}(\vec{u}_{0,n})\|_{H^s} \\
& \hphantom{XX}-\|\Xi_{1}(\vec{u}_{0,n})(T)-\Xi_{1}(\vec{\phi}_{n})(T)\|_{H^s}-\bigg\| \sum_{j=2}^{\infty}\Xi_{j}(\vec{u}_{0,n})(T)\bigg\|_{H^s} \\
&\gtrsim T^{2}R^{k}A^{d(k-1)}A^{\frac{d}{2}+s}- (1+RA^{\frac{d}{2}}N^{s}) -T^2 \|\vec{u}_{0}\|_{\H^{0}}R^{k-1}A^{d(k-1)} \\
& \hphantom{X} -T^{4}R^{2(k-1)+1}A^{2d(k-1)}f_{s}(A) \\
& \sim T^{2}R^{k}A^{d(k-1)}A^{\frac{d}{2}+s} \gg n,
%\label{N0}
\end{align*}
which establishes the second expression in \eqref{nexplosion} and hence Proposition~\ref{PROP:NI}.
It remains to show, given $n\in \mathbb{N}$, we can choose $A,R$, and $T$ depending on $N$ and then $N=N(n)\gg 1$ so that conditions (i) through (vi) hold. 

\medskip

\noi 
$\bullet$ \textbf{Case 1:} $-\frac1{k-1} \le s<0$.

\medskip

\noi
We choose 
\begin{align}
A=10, \qquad R=N^{-s-\dl}, \qquad T=N^{\frac {k-1}2( s + \frac \dl 2)},
\notag
\end{align}
for $0<\dl\ll1 $ sufficiently small so that
\begin{align*}
-s>\frac{k+1}{2}\dl.
\end{align*}
Our choice of $A$ ensures that $f_{s}(A)\sim A^{\frac{d}{2}+s}$ so (iv') essentially reduces to verifying (ii).
Then, we have 
\begin{align*}
RA^{\frac d2}N^{s}& \sim N^{-\dl}\ll \frac{1}{n}, \\ 
T^{2}R^{k-1}A^{d(k-1)}& \sim N^{-\frac{k-1}{2} \dl}\ll 1, \\
T^{2}R^{k}A^{d(k-1)} A^{\frac{d}{2}+s} & \sim N^{-s-\frac{k+1}{2}\dl}\gg n, \notag \\
TA &\sim N^{\frac{k-1}{2}(s+\frac{\dl}{2})}\ll \frac 1n, \\
T^{2}AN& \sim N^{(k-1)(s+\frac{1}{k-1}+\frac{\dl}{2})}\gg 1, 
\end{align*}
since $k \ge 2$ and $-\frac{1}{k-1}\leq s<0$.

\medskip

\noi 
$\bullet$ \textbf{Case 2:} $s <-\frac1{k-1}$.

\medskip

\noi
This case follows from Case 1 with $s=-\frac1{k-1}$.
More precisely, we choose
\begin{align}
A=10, \qquad R=N^{\frac 1{k-1}-\dl}, \qquad T=N^{-\frac 12 +\frac {k-1}4\dl}
\notag
\end{align}
for $0<\dl < \frac 2{k^2-1}$.
Then, we obtain
\begin{align*}
RA^{\frac d2}N^{s}& \sim N^{s+\frac1{k-1}-\dl}\ll \frac{1}{n}, \\ 
T^{2}R^{k-1}A^{d(k-1)}& \sim N^{-\frac{k-1}{2} \dl}\ll 1, \\
T^{2}R^{k}A^{d(k-1)} A^{\frac{d}{2}+s} & \sim N^{\frac1{k-1}-\frac{k+1}{2}\dl}\gg n, \notag \\
TA &\sim N^{-\frac 12 + \frac{k-1}{4} \dl}\ll \frac 1n, \\
T^{2}AN& \sim N^{\frac{k-1}2\dl}\gg 1, 
\end{align*}
since $k \ge 2$ and $s < - \frac1{k-1}$.

\begin{remark}\rm \label{RMK:dt}
In this remark, we analyse the growth of the Sobolev norm of $\dt u$.
First, note that the following analogue of Lemma \ref{LEM:Xiestimates} holds:
Given $\ve \phi \in \FLv^{1}(\M)$, $j\in \mathbb{N}$, $\vec{\psi}\in \FLv^{1}(\M) \cap \H^{0}(\M)$, and $0<T\leq 1$, we have 
\begin{align}
\| \dt \Xi_{j}(\ve{\phi})(T)\|_{\FL^{-1,1}} &\leq C^{j}T^{2j-1}\|\ve{\phi}\|_{\FLv^{1}}^{(k-1)j+1}, \label{FL1t}\\
\| \dt \Xi_{j}(\ve{\psi})(T)\|_{\FL^{-1,\infty}} &\leq C^{j}T^{2j-1}\|\ve{\psi}\|_{\FLv^{1}}^{(k-1)j-1}  \|\ve{\psi}\|_{\H^{0}}^{2}.
%\label{FLinftyt}
\notag
\end{align}
\noi
We use the same notation as in Subsection \ref{SUBSEC:31}.
Namely, we fix $\vec{u}_0\in \mathcal{S}(\M)\times \mathcal{S}(\M)$.
We choose $\ve{\phi}_{n}=(\phi_{0,n},0)$, where $\phi_{0,n}$ given by \eqref{data}.
Moreover, for $n\in \mathbb{N}$, we will choose $N(n)\gg 1$ and $R=R(N)\geq 1$ and $A\gg 1$, so that 
 \begin{align}
\|\ve{u}_0\|_{\H^{0}}\ll RA^{\frac{d}{2}+s}, \qquad  A\ll N, \qquad \text{and}  \qquad\|\ve{u}_0\|_{\FLv^{1}}\ll RA^{d},
\label{sizedatau2}
\end{align}
which is (vi) in Subsection \ref{SUBSEC:32}.
With $\ve{u}_{0,n}:=\ve{u}_0+\ve{\phi}_{n}$, for each $n\in \mathbb{N}$.
By \eqref{FL1t},
the solution $u_{n}\in C([0,T];\FL^{1}(\M))$ to \eqref{duhamel} with $(u_{n},\dt u_{n})\vert_{t=0}=\ve{u}_{0,n}$ is also in $C^1([0,T];\FL^{-1,1}(\M))$ and admits the power series expansion
\begin{align*}
\dt u_{n}=\sum_{j=0}^{\infty} \dt \Xi_{j}(\vec{u}_{0,n})=\sum_{j=0}^{\infty} \dt \Xi_{j}(\vec{u}_{0}+\vec{\phi}_{n})
%\label{uexpand3}
\end{align*}
provided \eqref{localtime} holds.
Analogous to Lemma~\ref{LEM:multlin2}, we have the following multilinear estimates:
\begin{align}
\| \dt \Xi_{1}(\ve{u}_{0,n})(t)- \dt \Xi_{1}(\ve{\phi}_{n})(t)\|_{H^{s-1}} &\lesssim t\|\vec{u}_{0}\|_{ \H^{0}}R^{k-1}A^{d(k-1)},  \label{hs132}  \\
\|\dt \Xi_{j}(\ve{u}_{0,n})(t)\|_{H^{s-1}} &\lesssim C^j t^{2j-1}(RA^{d})^{(k-1)j} ( \|\ve{u}_0\|_{\H^0}+Rf_{s}(A)) \label{hs442}
\end{align} 
for $j \in \NB$ and $0<t \le 1$.
The last estimate \eqref{hs442}, under \eqref{sizedatau2}, implies
\begin{align}
\begin{split}
\bigg\| \sum_{j=2} \dt \Xi_{j}(\ve{u}_{0,n})(T) \bigg\|_{H^{s-1}} 
&\les T^{3}R^{2(k-1)+1}A^{2d(k-1)}f_{s}(A),
\label{remaining2}
\end{split}
\end{align}
 provided $T^{2}(RA^d)^{k-1}\ll 1$.
Furthermore, following the proof of Proposition~\ref{PROP:INSTAB}, for integer $k\geq 2$, $s<0$ and $N^{-1}\ll T\ll A^{-1}$, we have 
\begin{align}
\|\dt \Xi_{1}(\ve{\phi}_{n})(T)\|_{H^{s-1}}\ges TR^{k}A^{d(k-1)}A^{-1}A^{\frac{d}{2}+s}. \label{lowerbound2}
\end{align}
Now, we choose the parameters $A, T, R$ and $N=N(n)$ as in Section~\ref{SUBSEC:32}.
In particular, \eqref{lowerbound2} with $A=10$ yields that
\begin{align}
\|\dt \Xi_{1}(\ve{\phi}_{n})(T)\|_{H^{s-1}}
\ges TR^{k}.
\label{lowerbound3}
\end{align}
Hence, from \eqref{hs132}, \eqref{remaining2}, and \eqref{lowerbound3},
we obtain
\begin{align*}
\|\dt u_{n}(t_{n})\|_{H^{s-1}}\gg n.
\end{align*}

\end{remark}

\section{Norm inflation in Fourier-amalgam spaces}\label{app:amalgam}
In this section, we detail the necessary modifications to the proof of Theorem~\ref{THM:NI} needed to prove Theorem~\ref{THM:FLq}. We apply the same overall approach as detailed in Sections~\ref{SEC:2} and \ref{SEC:3}. As we saw earlier, the proof of Theorem~\ref{THM:FLq} reduces to proving norm inflation but for regular initial data:
\begin{proposition} \label{PROP:NI2}
Let $d\in \NB$, $k\geq 2$ be an integer, $1\leq p,q\leq \infty$ and $s<0$.
Fix $u_0,u_1\in \mathcal{S}(\M)$. Then, for any $n\in \mathbb{N}$, there exists a smooth solution $u_n$ to the NLW~\eqref{NLW1} and $t_{n}\in (0, \frac{1}{n})$ such that
\begin{align*}
\|(u_n (0), \dt u_{n}(0))-(u_0,u_1)\|_{X^{p,q}_{s}(\M)}<\frac 1n
\qquad \text{and} \qquad
\|u_{n}(t_n)\|_{X^{p,q}_{s}(\M)}>n,
%\label{nexplosion2}
\end{align*}
where the space $X^{p,q}_{s}(\M)$ is defined in \eqref{Xspace}.
\end{proposition}

In order to prove Proposition~\ref{PROP:NI2}, we only need to make modifications to statements and estimates appearing in Section~\ref{SEC:3}. Namely, we begin with the exact same setup provided by Section~\ref{SEC:2}, in particular, the power series expansion \eqref{uexpand} assured by Lemma~\ref{LEM:local}.

Given $A,R,N$ to be chosen later, depending on $n\in \NB$ (if necessary), we set $\ve{\phi}_{n}=(\phi_{0,n}, 0)$ exactly as in \eqref{data}, \eqref{omega} and \eqref{Sig}. 
With $\mathcal{X}^{p,q}_{s}(\M):=X^{p,q}_{s}(\M)\times X^{p,q}_{s-1}(\M)$, we have 
\begin{align}
\| \ve{\phi}_{n}\|_{\mathcal{X}^{p,q}_{s}(\M)}\sim RA^{\frac{d}{q}}N^{s},
\label{datasize1}
\end{align}
with the understanding that the right hand side above is $RN^s$ if $q=\infty$.
While \eqref{datasize1} is obvious when $p=q$ (since $\mathcal{X}^{q,q}_{s}(\M)=\FLv^{s,q}(\M)$), we
provide a few details for the case when $p\neq q$ and $\M=\R^{d}$ to give a flavour of the arguments to follow relating to \eqref{fw}. We have $\|\ve{\phi}_{n}\|_{\mathcal{X}^{p,q}_{s}(\R^d)}=\| \phi_{0,n}\|_{\fw^{p,q}_{s}(\R^d)}$.
For fixed $n\in \Z^d$, 
\begin{align*}
\| \ft \phi_{0,n}(\xi)\chi_{n+Q}(\xi)\|_{L^p_{\xi}}&=R\, \bigg\| \sum_{\eta \in \Sigma} \chi_{(\eta+Q_A)\cap (n+Q)}(\xi) \bigg\|_{L^{p}_{\xi}} \\
 &=R \bigg( \sum_{\eta\in \Sigma} \text{meas}( (\eta+Q_A)\cap (n+Q)) \bigg)^{\frac{1}{p}}.
\end{align*}
For fixed $n\in \Z^{d}$, the above sum is non-zero if and only if there exists $\mu\in \{\pm1,\pm 2\}$ such that 
$|n+\mu Ne_1| \sim A$. Since $A\ll N$, the sets $\{ |n+\mu Ne_1|\sim A\}_{\mu}$ are disjoint and hence,
\begin{align*}
\| \phi_{0,n}\|_{\fw^{p,q}_{s}(\R^d)} \les R  \bigg(  \sum_{\mu\in \{\pm 1,\pm2\}} \sum_{|n+\mu Ne_1|\sim A} \jb{n}^{sq}\bigg)^{\frac{1}{q}} \sim RA^{\frac{d}{q}}N^{s}.
\end{align*}
The lower bound follows by bounding below the sum in $\eta$ above by just one particular choice of $\eta$. Similarly, we have
\begin{align*}
\|\phi_{0,n}\|_{\fw^{p,\infty}_{s}(\R^d)} \sim R\max_{\mu\in \{\pm 1,\pm2\}} \max_{|n+\mu Ne_1|\sim A} \jb{n}^{s} \sim RN^{s}.
\end{align*} 

We have the following analogue of Lemma~\ref{LEM:multlin2}.
\begin{lemma}\label{LEM:multlin3}
For any $s<0$ and $j\in \mathbb{N}$, the following estimates hold: 
\begin{align}
\|\vec{u}_{0,n}-\vec{u}_{0}\|_{\mathcal{X}^{p,q}_{s}(\M)} &\lesssim RA^{\frac{d}{q}}N^{s}, \notag \\
\|\Xi_{0}(\vec{u}_{0,n})(t)\|_{X^{p,q}_{s}(\M)} &\lesssim 1+RA^{\frac{d}{q}}N^{s},  \notag\\
\|\Xi_{1}(\vec{u}_{0,n})(t)-\Xi_{1}(\vec{\phi}_{n})(t)\|_{X^{p,q}_{s}(\M)} &\lesssim t^{2}\|\vec{u}_{0}\|_{\mathcal{X}^{p,q}(\M)}R^{k-1}A^{d(k-1)},  \label{hs32}  \\
\|\Xi_{j}(\vec{u}_{0,n})(t)\|_{X^{p,q}_{s}(\M)} &\lesssim C^j t^{2j}(RA^{d})^{(k-1)j} ( \|\vec{u}_0\|_{\mathcal{X}^{p,q}(\M)}+Rf_{s,q}(A)), \label{hs42}
\end{align} 
where $0<t\leq 1$ and where we define
\begin{equation}
f_{s,q}(A):=
\begin{cases}
 1 & \textup{if} \,\, \,s<-\frac{d}{q}, \\
 \left( \log A \right)^{\frac{1}{q}}& \textup{if} \,\,\, s=-\frac{d}{q}, \\
 A^{\frac{d}{q}+s} & \textup{if} \,\, \, s>-\frac{d}{q},
\end{cases}
%\label{fsA}
\notag
\end{equation}
for $1\leq q<\infty$  
and $f_{s,\infty}(A)= 1$.
\end{lemma}

The proof of Lemma~\ref{LEM:multlin3} follows exactly the same ideas as the proof of Lemma~\ref{LEM:multlin2}. The only noteworthy modification is in showing
\eqref{hs32} and \eqref{hs42}.
For this, we use the following product estimate:
\begin{align}
\| fg\|_{X_{0}^{p,q}(\M)} \les \|f\|_{X_{0}^{p,q}(\M)}\|g\|_{\FL^{1}(\M)}, \label{prodest}
\end{align}
for any $1\leq p, q\leq \infty$. 
When $\M=\T^{d}$, \eqref{prodest} follows by Young's inequality, while if $\M=\R^{d}$,
\eqref{prodest} follows from the uniform decomposition
\begin{align*}
\sum_{n\in \Z^{d}}\chi_{n+Q}(\xi)=1, \quad \text{for every} \quad \xi \in \R^d
\end{align*}
 and two applications of Young's inequality.
Similarly, we have the following analogue of Proposition~\ref{PROP:INSTAB}.
\begin{proposition}\label{PROP:INSTAB2}
Let $k\geq 2$ be an integer, $1\leq p,q \leq \infty$ and $s<0$.
Let $\vec{\phi}_{n}$ be as in \eqref{data}.
Then, for $(AN)^{-\frac 12} \ll T\ll A^{-1}$, we have 
\begin{align}
\| \Xi_{1}(\ve{\phi}_n)(T)\|_{X^{p,q}_{s}(\M)}
\ges T^{2}R^k A^{d(k-1)}A^{\frac{d}{q}+s}.
%\label{lowerbound}
\notag
\end{align}
\end{proposition}

Now as in Subsection~\ref{SUBSEC:32}, Lemma~\ref{LEM:multlin3} and Proposition~\ref{PROP:INSTAB2} imply that Proposition~\ref{PROP:NI2} follows provided we can show that given $n\in \NB$, we can choose $A,R,T,$ and $N$ depending on $n$ such that the following conditions are satisfied:
\begin{align*}
 \textup{(i)} & \quad RA^{\frac dq}N^s \ll \frac 1n, \\ 
 \textup{(ii)} & \quad T^2 R^{k-1}A^{d(k-1)} \ll 1,  \\ 
 \textup{(iii)} & \quad T^{2}R^{k}A^{d(k-1)}A^{\frac{d}{q}+s} \gg n,\\ 
 \textup{(iv)} & \quad T^2 R^k A^{d(k-1)}A^{\frac{d}{q}+s}\gg T^{4}R^{2(k-1)+1}A^{2d(k-1)}f_{s, q}(A), \\
 \textup{(v)} & \quad (AN)^{-\frac{1}{2}}\ll T\ll \min \Big( A^{-1}, \frac 1n \Big),\\ 
 \textup{(vi)} & \quad \| \ve{u}_0 \|_{\mathcal{X}^{p,q}(\M)}\ll RA^{\frac{d}{q}+s}, \,\, A\ll N, \,\, \| \ve{u}_0\|_{\FLv^{1}}\ll RA^d.
 \end{align*}
We notice that the only places where the indices $p$ and $q$ appear above are with respect to the parameter $A$ (also in the implicit constants but they do not matter). Therefore, we can take $A=10$, and choose $R=R(N)$ and $T=T(N)$ exactly as in Cases 1 and 2 in Subsection~\ref{SUBSEC:32} above. This completes the proof of Proposition~\ref{PROP:NI2} and hence also the proof of Theorem~\ref{THM:FLq}.

\begin{ackno}\rm
The authors would like to thank Tadahiro Oh for suggesting this problem.
M.O. would like to thank the School of Mathematics at the University
of Edinburgh, where this manuscript was prepared, for its hospitality.
J.\,F.~was supported by The Maxwell Institute Graduate School in Analysis and its
Applications, a Centre for Doctoral Training funded by the UK Engineering and Physical
Sciences Research Council (grant EP/L016508/01), the Scottish Funding Council, Heriot-Watt
University and the University of Edinburgh.
Both authors also acknowledge support from Tadahiro Oh's ERC starting grant no. 637995 “ProbDynDispEq”.
M.O.~was supported by JSPS KAKENHI Grant numbers JP16K17624 and JP20K14342.
\end{ackno}


\begin{thebibliography}{99}	

\bibitem{AC}
T.~Alazard, R.~Carles,
{\it Loss of regularity for supercritical nonlinear Schrödinger equations},
Math. Ann. 343 (2009), no. 2, 397--420.

%\bibitem{BO1} 
%J.~Bourgain,
%{\it Fourier transform restriction phenomena for certain lattice subsets and applications to nonlinear evolution equations. I. Schr\"{o}dinger equations}, Geom. Funct. Anal. 3 (1993), 107--156.

\bibitem{BT}
I.~Bejenaru, T.~Tao,
{\it Sharp well-posedness and ill-posedness results for a quadratic non-linear Schr\"{o}dinger equation},
J. Funct. Anal. 233 (2006),  no. 1, 228--259.

\bibitem{BC}
D.~G.~Bhimani, R.~Carles,
{\it Norm inflation for nonlinear Schr\"{o}dinger equations in negative Fourier-Lebesgue and modulation spaces},
J Fourier Anal Appl 26, 78 (2020). 

\bibitem{BTz}
J.L.~Bona, N.~Tzvetkov, 
{\it Sharp well-posedness results for the BBM equation},
Discrete Contin. Dyn. Syst.  23 (2009), no. 4, 1241--1252.

\bibitem{BTz1}
N.~Burq, N.~Tzvetkov,
{\it Random data Cauchy theory for supercritical wave equations. I. Local theory,}
Invent. Math. 173 (2008), no. 3, 449--475.

\bibitem{CDS}
R.~Carles, E.~Dumas, C.~Sparber,
{\it Geometric optics and instability for NLS and Davey-Stewartson models},
J. Eur. Math. Soc. 14 (2012), 1885--1921.

\bibitem{CK}
R.~Carles, T.~Kappeler,
{\it Norm-inflation for periodic NLS equations in negative Sobolev spaces},
 Bull. Soc. Math. France 145 (2017), 623--642.

\bibitem{CCT} M. Christ, J. Colliander, T. Tao, 
{\it Ill-posedness for nonlinear Schr\"odinger and wave equations}, 
arXiv:math/0311048 [math.AP].


\bibitem{CP}
A.~Choffrut, O.~Pocovnicu,
{\it Ill-posedness of the cubic nonlinear half-wave equation and other fractional
NLS on the real line}, 
 Int. Math. Res. Not. IMRN (2018), no. 3, 699--738.

\bibitem{Fei}
H. G. Feichtinger, {it Wiener amalgams over Euclidean spaces and some of their applications}, in: Function spaces (Edwardsville, IL, 1990) Lecture Notes in Pure and Appl. Math, 136, Dekker, New York, 1992, pp. 123–137.

\bibitem{FO}
J.~Forlano, T.~Oh,
{\it Normal form approach to the one-dimensional cubic nonlinear Schr\"{o}dinger equation in Fourier-amalgam spaces},
preprint.

\bibitem{GrigN}
V. Grigoryan, A. R. Nahmod,
{\it Almost critical well-posedness for nonlinear wave equations with
$Q_{\mu \nu}$ null forms in 2D},
 Math. Res. Lett. 21 (2014), no. 2, 313--332.

\bibitem{GrigT}
V. Grigoryan, A. Tanguay,
{\it Improved well-posedness for the quadratic derivative nonlinear wave
equation in 2D},
J. Math. Anal. Appl. 475 (2019), no. 2, 1578--1595.

\bibitem{Grunrock}
A. Gr\"unrock,
{\it On the wave equation with quadratic nonlinearities in three space
dimensions},
J. Hyperbolic Differ. Equ. 8 (2011), no. 1, 1--8.

\bibitem{IO}
T.~Iwabuchi, T.~Ogawa,
{\it Ill-posedness for the nonlinear Schr\"odinger equation with quadratic nonlinearity in low dimensions},
Trans. Amer. Math. Soc. 367 (2015), no. 4, 2613--2630.

\bibitem{Kap}
L.~Kapitanski,
{\it Weak and yet weaker solutions of semilinear wave equations},
Comm. Part. Diff. Eq. 19 (1994), 1629--1676.

\bibitem{KeelTao}
M.~Keel, T.~Tao, {\it Endpoint Strichartz estimates},
Amer. J. Math. 120 (1998), no. 5, 955--980.

%\bibitem{KPV}
%C.~Kenig, G.~Ponce, L.~Vega,
%{\it On the ill-posedness of some canonical dispersive equations},
%Duke Math. J. 106 (2001), no.3, 617--633.
 
 \bibitem{Kishimoto}
N.~Kishimoto,
{\it A remark on norm inflation for nonlinear Schrödinger equations},
Commun. Pure Appl. Anal. 18 (2019), 1375--1402.

\bibitem{Leb}
G.~Lebeau,
{\it Perte de r\'egularit\'e pour les \'equations d'ondes sur-critiques},
(French) [Loss of regularity for super-critical wave equations]
 Bull. Soc. Math. France 133 (2005), no. 1, 145--157. 

\bibitem{Lin1}
H.~Lindblad,
{\it A sharp counterexample on the local existence of low-regularity solutions to nonlinear wave equations},
Duke Math. J. 72 (1993), no. 2, 503--539.

\bibitem{Lin2}
H.~Lindblad,
{\it Counterexamples to local existence for semi-linear wave equations},
Amer. J. Math. 118 (1996), no. 1, 1--16.

\bibitem{LiSo}
H.~Lindblad, C.~D.~Sogge,
{\it On existence and scattering with minimal regularity for semilinear wave equations},
Jour. Func. Anal. 130 (1995), 357--426.

\bibitem{M}
L.~Molinet,
{\it On ill-posedness for the one-dimensional periodic cubic Schr\"odinger equation},
Math. Res. Lett. 16 (2009), 111--120.


 \bibitem{O17}
T.~Oh, 
{\it A remark on norm inflation with general initial data for the cubic nonlinear Schr\"odinger equations in negative Sobolev spaces}, 
Funkcial. Ekvac. 60 (2017), 259--277. 

 
%\bibitem{OS}
%T.~Oh, C.~Sulem,
%{\it On the one-dimensional cubic nonlinear Schr\"odinger equation below $L^2$},
% Kyoto J. Math. 52 (2012), no.1, 99--115

\bibitem{Ok}
M.~Okamoto,
{\it Norm inflation for the generalized Boussinesq and Kawahara equations,}
 Nonlinear Anal. 157 (2017), 44--61. 

%\bibitem{Tsutsumi}
%Y.~Tsutsumi,
%{\it $L^2$-solutions for nonlinear Schr\"odinger equations and nonlinear groups}, Funkcial. Ekvac.
%30 (1987), no. 1, 115--125.

\bibitem{OOTz}
T.~Oh, M.~Okamoto, N.~Tzvetkov,
{\it Uniqueness and non-uniqueness of the Gaussian free field evolution 
under the two-dimensional  Wick ordered cubic wave equation},
preprint.

\bibitem{Tao}
T.~Tao,
{\it Low regularity semi-linear wave equations},
Comm. Partial Differential Equations, 24 (1999), no. 3-4, 599--629.

\bibitem{TAO} 
T.~Tao, {\it Nonlinear dispersive equations. Local and global analysis,} CBMS Regional Conference Series in Mathematics, 106. Published for the Conference Board of the Mathematical Sciences, Washington, DC; by the American Mathematical Society, Providence, RI, 2006. xvi+373 pp.


\bibitem{Tz1}
N.~Tzvetkov,
{\it Random data wave equations,} (English summary) Singular random dynamics, 221–313,
Lecture Notes in Math., 2253, Fond. CIME/CIME Found. Subser., Springer, Cham, [2019], ©2019.

\bibitem{Xia}
B.~Xia, 
{\it Generic ill-posedness for wave equation of power type on 3D torus}, 
arXiv:1507.07179 [math.AP].






\end{thebibliography}
\end{document}